\newcommand{\Span}{\operatorname{Span}} 
\newcommand{\Wan}{\operatorname{Wan}}
\newcommand{\Aut}{\operatorname{Aut}}
\newcommand{\Tr}{\operatorname{Tr}}
\newcommand{\diag}{\operatorname{diag}}
\newtheorem{obs}[theorem]{Observation}
\begin{document}

\title{The factor type of dissipative KMS weights on graph $C^*$-algebras
}


\author{Klaus Thomsen
}


\institute{K. Thomsen \at
              Department of Mathematics, Aarhus University, Ny Munkegade, 8000 Aarhus C, Denmark \\
              \email{matkt@math.au.dk}           
}

\date{Received: date / Accepted: date}

\maketitle

\begin{abstract}
  We calculate the S-invariant of Connes for the von Neumann algebra factors arising from KMS weights of a generalized gauge action on a simple graph C*-algebra when the associated measure on the infinite path space of the graph is dissipative under the action of the shift.

\keywords{KMS states \and von Neumann algebra factors \and S-invariant}
\subclass{46L55 \and 46L36 }
\end{abstract}


\section{Introduction} 
The purpose with this paper is to determine the factor types of a class of KMS weights and KMS states that arise from generalized gauge actions on simple graph $C^*$-algebras. In previous work the author has studied such KMS weights and among other things it was shown that they can be naturally divided into three classes depending on the properties of the measure they induce on the unit space of the groupoid underlying the $C^*$-algebra. For the KMS weights we consider here the measure is concentrated on the space of infinite paths in the graph, and for those there is a dichotomy; the measure is either concentrated on the paths that contain all arrows infinitely many times or on the set of paths that eventually leave every finite set of vertexes. The first case can at most arise for one inverse temperature in which case the measure is conservative for the shift map and the KMS weight is unique up to multiplication by scalars. In \cite{Th5} the corresponding KMS weights were called conservative and their factor types were determined. In the second case the measure is dissipative for the shift and we call them therefore dissipative KMS weights. There are almost always many more dissipative KMS weights than conservative ones when the graph is infinite. It is the purpose of the present paper to determine the factor types of the extremal dissipative KMS weights. This will be done by calculating the $S$-invariant of Connes for the associated factors. It was shown in \cite{Th2} that there is a bijective correspondence between the rays of KMS weights and the KMS states on the corner defined by a vertex in the graph, and in \cite{Th5} it was shown that the factor generated by the GNS construction of the state is a corner in the factor of the corresponding KMS weight. Hence the two von Neumann algebra factors have the same $S$-invariant and our calculation covers both cases. 

In the preceding work the author developed methods for the calculation of the $\Gamma$-invariant of Connes for the factors arising from KMS weights on graph $C^*$-algebras, see Section 4.4 of \cite{Th1}, Theorem 4.1 in \cite{Th3} and Theorem 3.3 in \cite{Th5}. These methods are difficult to apply to general dissipative KMS weights. In addition, unlike the conservative KMS weights, dissipative KMS weights give rise to both purely infinite and semi-finite factors and it is therefore more appropriate to use the $S$-invariant since it can distinguish semi-finite factors from type $III_0$ factors which the $\Gamma$-invariant can not. To this end, inspired by the work of Laca, Larsen, Neshveyev, Sims and Webster in \cite{LLNSW}, we will here use the methods and results from the papers by Feldman and Moore, \cite{FM1},\cite{FM2}. Compared to \cite{LLNSW} a major difference is that in the present setting the fixed point algebra of the modular automorphism group is not a factor in general.  

The main new ingredient we introduce in order to formulate and prove the results is a closed sub-semigroup of the multiplicative semigroup of non-negative real numbers which we associate to every wandering path in the graph by using the potential defining the generalized gauge action. We call this the detour semigroup of the path. It follows from results in \cite{Th4} that the conformal measure defined by an extremal dissipative $\beta$-KMS weight is concentrated on wandering paths with the same detour semigroup, and the main result is obtained by proving that the $S$-invariant of the corresponding factor is the detour semigroup raised to the power $\beta$.

The last section of the paper is devoted to applications and examples. In particular, the main result is used to determine which factors arise from the KMS states of the generalized gauge actions on Cayley graphs studied in \cite{CT2}. It follows that these factors are all of type $III_{\lambda}$ for some $\lambda \neq 0$, exactly as the factors arising from conservative KMS weights, cf. \cite{Th5}. To show that dissipative KMS weights can also give rise to factors of type $I$, $II$ and $III_0$ we study the dissipative KMS weights for generalized gauge actions on graphs with $\beta$-summable exits as they were introduced in \cite{Th2}. The definition of the detour semigroup resembles the definition of the ratio set of Kriger, \cite{Kr}, which in turn was derived from the asymptotic ratio set of Araki and Woods, \cite{AW}, and we give more substance to this similarity by showing that for strongly connected graphs with countably many exits the factor of an extremal dissipative $\beta$-KMS weight contains a corner which is an Araki-Woods factor whose asymptotic ratio set in many cases is the same as the relevant detour semigroup raised to the power $\beta$. This connection to Araki-Woods factors allows us to easily exhibit factors of type $I$, type $II$ and type $III_0$ arising from dissipative KMS weights.


\section{Factors from KMS weights}

Let $A$ be a $C^*$-algebra and $A_{+}$ the convex cone of positive
elements in $A$. A \emph{weight} on $A$ is a map $\psi : A_{+} \to
[0,\infty]$ with the properties that $\psi(a+b) = \psi(a) + \psi(b)$
and $\psi(\lambda a) = \lambda \psi(a)$ for all $a, b \in A_{+}$ and all
$\lambda \in \mathbb R, \ \lambda > 0$. By definition $\psi$ is
\emph{densely defined} when $\left\{ a\in A_{+} : \ \psi(a) <
  \infty\right\}$ is dense in $A_{+}$ and \emph{lower semi-continuous}
when $\left\{ a \in A_{+} : \ \psi(a) \leq \alpha \right\}$ is closed
for all $\alpha \geq 0$. We refer to \cite{Ku}, \cite{KV1} and \cite{KV2} for more information on weights, and as in \cite{KV2} we say that a weight is \emph{proper}
when it is non-zero, densely defined and lower semi-continuous. Let $\psi$ be a proper weight on $A$. Set $\mathcal N_{\psi} = \left\{ a \in A: \ \psi(a^*a) < \infty
\right\}$ and note that 
\begin{equation*}\label{f3}
\mathcal N_{\psi}^*\mathcal N_{\psi} = \Span \left\{ a^*b : \ a,b \in
  \mathcal N_{\psi} \right\}
\end{equation*} 
is a dense
$*$-subalgebra of $A$, and that there is a unique well-defined linear
map $\mathcal N_{\psi}^*\mathcal N_{\psi} \to \mathbb C$ which
extends $\psi : \mathcal N_{\psi}^*\mathcal N_{\psi} \cap A_+ \to
[0,\infty)$. We denote also this densely defined linear map by $\psi$.

Let $\alpha : \mathbb R \to \Aut A$ be a point-wise
norm-continuous one-parameter group of automorphisms on
$A$. Let $\beta \in \mathbb R$. Following \cite{C} we say that a proper weight
$\psi$ on $A$ is a \emph{$\beta$-KMS
  weight} for $\alpha$ when
\begin{enumerate}
\item[i)] $\psi \circ \alpha_t = \psi$ for all $t \in \mathbb R$, and
\item[ii)] for every pair $a,b \in \mathcal N_{\psi} \cap \mathcal
  N_{\psi}^*$ there is a continuous and bounded function $F$ defined on
  the closed strip $D_{\beta}$ in $\mathbb C$ consisting of the numbers $z \in \mathbb C$
  whose imaginary part lies between $0$ and $\beta$, and is
  holomorphic in the interior of the strip and satisfies that
$$
F(t) = \psi(a\alpha_t(b)), \ F(t+i\beta) = \psi(\alpha_t(b)a)
$$
for all $t \in \mathbb R$.
\end{enumerate}   
A $\beta$-KMS weight $\psi$ with the property that 
$$
\sup \left\{ \psi(a) : \ 0 \leq a \leq 1 \right\} = 1
$$
will be called a \emph{$\beta$-KMS state}. A $\beta$-KMS weight $\psi$ on $A$ is \emph{extremal} when the only
$\beta$-KMS weights $\varphi$ on $A$ with the property that
$\varphi(a) \leq \psi(a)$ for all $a \in A_+$ are scalar
multiples of $\psi$, viz. $\varphi = s\psi$ for some $s > 0$.  

Given a proper weight $\psi$ on a $C^*$-algebra $A$ there is a GNS-type
construction consisting of a Hilbert space $H_{\psi}$, a linear map
$\Lambda_{\psi} : \mathcal N_{\psi} \to H_{\psi}$ with dense range and
a non-degenerate representation $\pi_{\psi}$ of $A$ on $H_{\psi}$ such that
\begin{enumerate}
\item[$\bullet$] $\psi(b^*a) = \left<
    \Lambda_{\psi}(a),\Lambda_{\psi}(b)\right>, \ a,b \in \mathcal
  N_{\psi}$, and
\item[$\bullet$] $\pi_{\psi}(a)\Lambda_{\psi}(b) = \Lambda_{\psi}(ab), \
  a \in A, \ b \in \mathcal N_{\psi}$,
\end{enumerate} 
cf. \cite{Ku}, \cite{KV1}, \cite{KV2}. When $\psi$ is an extremal $\beta$-KMS weight the von Neumann algebra 
$$
M(\psi) = \pi_{\psi}(A)''
$$ 
generated by $\pi_{\psi}(A)$ is a factor, cf. Lemma 4.9 in \cite{Th1}. We aim to determine the factor type of $M(\psi)$ for the cases we consider by calculating the $S$-invariant $S(M(\psi))$ of Connes; see Section III in \cite{Co}. To this end we use that $\psi$ extends to a normal semi-finite faithful weight
$\tilde{\psi}$ on $M(\psi)$ such that $\psi =
\tilde{\psi} \circ \pi_{\psi}$, and that the modular group on
$M(\psi)$ corresponding to $\tilde{\psi}$ is the
one-parameter group $\theta$ given by
$$
\theta_t = \tilde{\alpha}_{-\beta t} \ ,
$$
where $\tilde{\alpha}$ is the
$\sigma$-weakly continuous extension of $\alpha$ defined such that
$\tilde{\alpha}_t \circ \pi_{\psi} = \pi_{\psi} \circ \alpha_t$, cf. Section 2.2 in \cite{KV1}.

Assume now that $p\in A$ is projection in the fixed point algebra of $\alpha$, and that $p$ is full in $A$. Let $(\pi_{\phi},H_{\phi}, u_{\phi})$ be the GNS-representation of the state $\phi$ on $pAp$ defined such that
$$
\phi(x) = \psi(p)^{-1}\psi(x) \ ,
$$ 
cf. Theorem 2.4 in \cite{Th2}.
By Lemma 2.3 in \cite{Th5} there is an $*$-isomorphism 
\begin{equation*}\label{07-03-18a}
\pi_{\phi}\left(pAp\right)'' \ \simeq \ \pi_{\psi}(p)M(\psi)\pi_{\psi}(p)
\end{equation*}
of von Neumann algebras. 
It follows therefore from \cite{Co} that 
$$
 S\left(M(\psi)\right) =S\left(\pi_{\phi}\left(pAp\right)'' \right)  \ .
$$
This realisation of $S\left(M(\psi)\right)$ has many technical advantages because $\phi$ is a state and it will be used below.

\subsection{Generalized gauge actions on graph $C^*$-algebras}
Let $\Gamma$ be a countable directed graph with vertex set $\Gamma_V$ and arrow set
$\Gamma_{Ar}$. For an arrow $a \in \Gamma_{Ar}$ we denote by $s(a) \in \Gamma_V$ its source and by
$r(a) \in \Gamma_V$ its range. A vertex $v \in \Gamma_V$ which does not emit any arrow is
a \emph{sink}, while a vertex $v$ which emits infinitely many arrows is called an \emph{infinite
  emitter}. An \emph{infinite path} in $\Gamma$ is an element
$p = (p_i)_{i=1}^{\infty} \in \left(\Gamma_{Ar}\right)^{\mathbb N}$ such that $r(p_i) = s(p_{i+1})$ for all
$i$.  A finite path $\mu = a_1a_2 \cdots a_n = (a_i)_{i=1}^n \in \left(\Gamma_{Ar}\right)^n$ is
defined similarly. The number of arrows in $\mu$ is its \emph{length}
and we denote it by $|\mu|$. A vertex $v \in \Gamma_V$ will be considered as a finite path of length $0$. We let $P(\Gamma)$ denote the set of
infinite paths in $\Gamma$ and $P_f(\Gamma)$ the set of finite paths in $\Gamma$. We extend the source map to $P(\Gamma)$ such that
$s(p) = s(p_1)$ when $p = \left(p_i\right)_{i=1}^{\infty} \in P(\Gamma)$, and the
range and source maps to $P_f(\Gamma)$ such that $s(\mu) = s(a_1)$ and $r(\mu)
= r(a_{|\mu|})$ when $|\mu|\geq 1$, and set $s(v) = r(v) = v$ when $v\in \Gamma_V$. When $\mu$ is a finite path and $x$ is either a finite or infinite path with $r(\mu) = s(x)$ we can define the concatenation $\mu x$ in the obvious way, the result being a finite or infinite path depending on $x$.

The $C^*$-algebra $C^*(\Gamma)$ of the graph $\Gamma$ was introduced in this generality in \cite{FLR} as the universal
$C^*$-algebra generated by a collection of partial isometries and projections subject to conditions determined by $\Gamma$, but in the present work we shall depend on its realisation as the $C^*$-algebra of a groupoid. When $\Gamma$ is row-finite there is a description of the relevant groupoid $\mathcal G$ in \cite{KPRR} and for a general directed graph it was described by Paterson in \cite{Pa}. The unit space $\mathcal G^0$ of the groupoid $\mathcal G$ is the union
$$
\mathcal G^0 = P(\Gamma) \cup Q(\Gamma), 
$$
where $Q(\Gamma)$ is the set of finite paths that terminate at a vertex which is either a sink or an infinite emitter. Note that $Q(\Gamma)$ is countable.  Associated to the finite path $\mu \in P_f(\Gamma)$ is the cylinder set 
$$
Z(\mu) = \left\{(p_i)_{i=1}^{\infty} \in \mathcal G^0 : \  
p_j = a_j, \ j = 1,2, \cdots, |\mu|\right\}  \ .
$$
In particular, when $\mu$ has length $0$ and hence is just a vertex $v$,
$$
Z(v) = \left\{ p \in \mathcal G^0 :  \  s(p) =v \right\} \  .
$$
When $\nu \in P_f(\Gamma)$ and $F$ is a finite subset of $P_f(\Gamma)$, set
\begin{equation*}\label{a6}
Z_F(\nu) = Z(\nu) \backslash \left(\bigcup_{\mu \in F} Z(\mu)\right)  \ .
\end{equation*}
 $\mathcal G^0$ is a locally compact Hausdorff space
  in the topology for which the sets of the form $Z_F(\nu)$ is a
  basis of compact and open sets. Then the
elements of $\mathcal G^0 \times \mathbb Z \times \mathcal G^0$ of the form
$$
(\mu x, |\mu| - |\mu'|, \mu'x),
$$
for some $x\in \mathcal G^0, \ \mu,\mu' \in P_f(\Gamma)$, constitute a groupoid $\mathcal G$ with product
$$
(\mu x, |\mu| - |\mu'|, \mu' x)(\nu y, |\nu| -|\nu'|, \nu' y) = (\mu
x, \ |\mu | + |\nu| - |\mu'| - |\nu'|, \nu' y),
$$ 
defined when $\mu' x = \nu y$, and with the involution 
$$
(\mu x, |\mu| - |\mu'|,
\mu'x)^{-1} = (\mu' x, |\mu'| - |\mu|, \mu x) \ .
$$ 
Equipped with the topology for which sets of the form
\begin{equation*}\label{base}
\left\{ \left( \mu x, |\mu| -|\mu'|, \mu'x\right): \ \mu x \in Z_F(\mu), \ \mu'x \in Z_{F'}(\mu') \right\}
\end{equation*}
constitute a basis, $\mathcal G$ is a locally compact, totally disconnected Hausdorff \'etale groupoid. The graph $C^*$-algebra $C^*(\Gamma)$ of $\Gamma$ is then isomorphic to the the associated reduced groupoid $C^*$-algebra, i.e. $C^*(\Gamma) = C^*_r(\mathcal G)$, \cite{Re}. In the following we use the identification $C^*(\Gamma) = C_r^*(\mathcal G)$.

We identify $\mathcal G^0$ with the unit space of $\mathcal G$ via
the embedding
$\mathcal G^0 \ni x \ \mapsto \ ( x, 0,x)$. The range and source maps $r : \mathcal G\to \mathcal G^0$ and $s : \mathcal G \to \mathcal G^0$ of $\mathcal G$ are then the coordinate projections; viz.
$$
r \left(\mu x, |\mu| -|\mu'|, \mu'x\right) = \mu x \ \ \text{and} \ \ s \left(\mu x, |\mu| -|\mu'|, \mu'x\right) = \mu' x \ .
$$

A function $F :  \Gamma_{Ar} \to \mathbb R$ will be called a \emph{potential} on $\Gamma$.  We extend $F$ to a map $F : P_f(\Gamma) \to \mathbb R$ such that $F(v) = 0$ when $v \in \Gamma_V$ and 
$$
F(\mu) = \sum_{i=1}^n F(p_i)
$$
when $\mu = (p_i)_{i=1}^n  \in \left(\Gamma_{Ar}\right)^n$. We can then define a homomorphism (or cocycle) $c_F : \mathcal G \to \mathbb R$ such that
$$
c_F\left(\mu x, |\mu| - |\mu'|, \mu' x\right) = F( \mu) - F(\mu') \  ,
$$
and we obtain a continuous one-parameter group of automorphisms $\alpha^F = \left(\alpha^F_t\right)_{t \in \mathbb R}$ on $C^*(\Gamma)$ defined such that 
\begin{equation*}\label{13-08-form}
\alpha^F_t(f)(\mu x, |\mu| -|\mu'|, \mu' x) = e^{i t\left(F(\mu) - F(\mu')\right)} f(\mu x, |\mu| -|\mu'|, \mu' x)
\end{equation*}
when $f \in C_c(\mathcal G)$, cf. \cite{Re}. An action of this sort is called a \emph{generalized gauge action}; the \emph{gauge action} itself being the one-parameter group corresponding to the constant function $F =1$.

We are here only interested in cases where $C^*(\Gamma)$ is simple, and work by Szymanski gives necessary and sufficient conditions for this to hold. See Theorem 12 in \cite{Sz}. Simplicity of $C^*(\Gamma)$ will be a standing assumption from here on. The first consequence of this is that every KMS weight for a generalized gauge action on $C^*(\Gamma)$ is diagonal in the sense that it factorises through the conditional expectation 
$$
P : C^*(\Gamma) \to C_0(\mathcal G^0),
$$
by Proposition 5.6 in \cite{CT1}. Thus every KMS weight $\psi$ on $C^*(\Gamma)$ is given by a regular Borel measure $m^{\psi}$ on $\mathcal G^0$ in the sense that
\begin{equation}\label{16-06-18}
\psi(a) = \int_{\mathcal G^0} P(a) \ \mathrm{d}m^{\psi}
\end{equation}
for every positive element $a \in C^*(\Gamma)$. As shown in \cite{Th2} the measure $m^{\psi}$ of an extremal $\beta$-KMS weight  $\psi$ is concentrated either on $Q(\Gamma)$ or on $P(\Gamma)$. To obtain a further differentiation let $\Wan(\Gamma)$ denote the set of \emph{wandering} paths $p \in P(\Gamma)$, i.e. the infinite paths $p$ which 'go to infinity' in the sense that for every finite set $F \subseteq \Gamma_V$ there is an $N \in \mathbb N$ such that $s(p_n) \notin F \ \forall n \geq N$. $\Wan(\Gamma)$ is a Borel subset of $P(\Gamma)$ by Lemma 4.13 in \cite{Th4}. When $m^{\psi}$ is concentrated on $P(\Gamma)$ it is either concentrated on the set of wandering paths $\Wan(\Gamma)$ or on its complement. When the latter occurs the measure $m^{\psi}$ is conservative for the action of the shift on $P(\Gamma)$ and its factor type was determined in \cite{Th5}. In this paper we consider the cases where $m^{\psi}$ is concentrated on $\Wan(\Gamma)$.  The measure $m^{\psi}$ is then dissipative with respect to the action of the shift, and we call $\psi$ a \emph{dissipative} KMS weight. The aim is to determine the factor type of $M(\psi)$ when $\psi$ is an extremal dissipative $\beta$-KMS weight and for this we calculate $S\left(M(\psi)\right)$. This will be done by realizing $M(\psi)$ as the von Neumann algebra of a countable Borel equivalence relation and by using results from \cite{FM1} and \cite{FM2}.

\section{The main result}

\subsection{The detour semigroup of a wandering path}

Let $p \in \Wan(\Gamma)$. When $i < j$ we write $p[i,j)$ for the path $p_ip_{i+1}\cdots p_{j-1}$ and $p[i,\infty)$ for the infinite path $p_ip_{i+1}\cdots $. A \emph{detour} on $p$ is a triple $(i,j; \mu)$ where $i < j$ in $\mathbb N$ and $\mu$ is a finite path such that $s(\mu) = s(p_i)$ and $r(\mu) = s(p_j)$. We set
$$
\Delta_p(i,j;\mu) = F\left(p[i,j)\right) - F(\mu) \ .
$$
Given a finite subset $H \subseteq \Gamma_V$ and a finite or infinite path $p$ in $\Gamma$ we write $p \cap H$ for the set of vertexes from $H$ occurring in $p$.  We say that the detour $(i,j;\mu)$ is \emph{outside of} $H$ when $p[i,\infty)\cap H  = \mu \cap H =\emptyset$.

For notational convenience we consider in the following $[0,\infty]$ as a topological space identified in the natural way with the one-point compactification of $[0,\infty)$, and we use the convention that
$$
\frac{1}{0} = \infty \ \ \text{and}  \ \ \frac{1}{\infty} = 0 \ .
$$

Let $\Phi(p)$ be the set of non-negative real numbers $t \in [0,\infty)$ with the property that for all open neighbourhoods $U$ of $t$ and $V$ of $\frac{1}{t}$ in $[0,\infty]$ and for all finite subsets $H \subseteq \Gamma_V$, there are detours $(i,j;\mu)$ and $(i',j';\mu')$ on $p$ outside of $H$ such that 
$$
e^{\Delta_p(i,j;\mu)} \in U 
$$
and
$$
e^{\Delta_p(i',j';\mu')} \in V \ .
$$

\begin{lemma}\label{22-05-18} For $p \in \Wan(\Gamma)$ the set $\Phi(p)$ is a closed sub-semigroup of $\left( [0,\infty), \ \cdot\right) $ containing $1$, and every element of $\Phi(p) \backslash \{0\}$ has an inverse in $\Phi(p)$. 
\end{lemma}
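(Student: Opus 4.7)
My plan is to verify the four claims separately; three of them follow quickly from the symmetry of the defining condition in $(t,1/t)$ and from wandering of $p$, and the only substantive step is the semigroup property, which proceeds by concatenating two detours.

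For the easy points: $1\in\Phi(p)$ follows from the trivial detour $(i,j;p[i,j))$, which has $\Delta_p=0$, so $e^{\Delta_p}=1$ sits in every neighbourhood of $1$; wandering lets me choose $i$ large enough that $p[i,\infty)\cap H=\emptyset$, which also gives $p[i,j)\cap H=\emptyset$, so the detour is outside $H$. Closedness of $\Phi(p)$ in $[0,\infty)$ uses that inversion is a homeomorphism of $[0,\infty]$: for $t_n\to t$ with $t_n\in\Phi(p)$, any open $U\ni t$ and $V\ni 1/t$ are eventually neighbourhoods of $t_n$ and $1/t_n$, and the defining condition for $t_n$ supplies the detours asked of $t$. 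For the inverse property, the condition ``there exist detours with $e^{\Delta_p}$-values in $U$ and in $V$ outside $H$'' is symmetric in $(U,V)$; relabelling $(U,V)$ as $(V,U)$ converts the defining condition for $t\in\Phi(p)$ verbatim into that for $1/t\in\Phi(p)$, valid whenever $t>0$.

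The heart of the argument is the semigroup property. Let $s,t\in\Phi(p)$; I may assume $s,t>0$, since otherwise $st=0\in\Phi(p)$ already. Given open $U\ni st$ and $V\ni 1/(st)$ in $[0,\infty]$ and a finite $H$, continuity of multiplication on $(0,\infty)$ supplies open $U_1\ni s$, $U_2\ni t$, $V_1\ni 1/s$, $V_2\ni 1/t$ with $U_1U_2\subseteq U$ and $V_1V_2\subseteq V$. I first apply $s\in\Phi(p)$ with data $(U_1,V_1,H)$ to obtain detours $(i_1,j_1;\mu_1)$ and $(i'_1,j'_1;\mu'_1)$ outside $H$ with exponential values in $U_1$ and $V_1$. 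Setting $J=\max\{j_1,j'_1\}$ and $H'=H\cup\{s(p_k):1\leq k\leq J\}$, I then apply $t\in\Phi(p)$ with data $(U_2,V_2,H')$ to obtain detours $(i_2,j_2;\mu_2)$ and $(i'_2,j'_2;\mu'_2)$ outside $H'$. Since $s(p_{i_2}),s(p_{i'_2})\notin H'$, the indices satisfy $i_2,i'_2>J$, so the concatenations
\[
\bigl(i_1,j_2;\,\mu_1\cdot p[j_1,i_2)\cdot\mu_2\bigr)\quad\text{and}\quad\bigl(i'_1,j'_2;\,\mu'_1\cdot p[j'_1,i'_2)\cdot\mu'_2\bigr)
\]
are legitimate detours on $p$. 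The $F$-weight of the gap subpath $p[j_1,i_2)$ appears on both sides of $\Delta_p$ and cancels, so $\Delta_p$ of the concatenated detour is $\Delta_p(i_1,j_1;\mu_1)+\Delta_p(i_2,j_2;\mu_2)$, placing the two exponentials in $U_1U_2\subseteq U$ and $V_1V_2\subseteq V$; each of the three constituents of the compound $\mu$'s avoids $H$, since the middle subpath lies inside $p[i_1,\infty)$ or $p[i'_1,\infty)$ and $H\subseteq H'$.

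The main obstacle throughout is this bookkeeping: one must simultaneously ensure that the second detour starts past the first (so the concatenated path makes sense) and that the compound $\mu$ still avoids the original $H$. Enlarging $H$ to $H'$ by adjoining the initial vertex segment $\{s(p_k):1\leq k\leq J\}$ accomplishes both, since the hypothesis applied with $H'$ forces the second detour's starting index beyond $J$ while leaving the gap subpath inside an already $H$-avoiding tail of $p$.
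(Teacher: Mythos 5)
Your proof is correct and follows essentially the same route as the paper: the crux in both is the concatenation identity $e^{\Delta_p(i,j';\,\mu p[j,i')\mu')}=e^{\Delta_p(i,j;\mu)}e^{\Delta_p(i',j';\mu')}$ for detours with $i'>j$, and your device of enlarging $H$ to $H'=H\cup\{s(p_k):k\leq J\}$ to force the second detour past the first is exactly the bookkeeping the paper leaves as ``straightforward.'' The remaining points (the trivial detour for $1\in\Phi(p)$, closedness via continuity of inversion on $[0,\infty]$, and inverses via the $(U,V)$-symmetry of the definition) are also handled correctly.
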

\begin{proof} If $(i,j;\mu)$ and $(i',j';\mu')$ are detours on $p$ and $i' > j$ then 
$$
(i,j'; \mu p[j,i') \mu')
$$ 
is also a detour on $p$, and
$$
e^{\Delta_p\left(i,j'; \mu p[j,i') \mu'\right)} = e^{\Delta_p(i,j;\mu)}  e^{\Delta_p(i',j';\mu')} \ .
$$ 
From this observation it follows in a straightforward way that $\Phi(p)$ is a closed semigroup for which all non-zero elements have an inverse.
That $\Phi(p)$ contains $1$ follows because $(i,j;p[i,j))$ is a detour on $p$ for all $i < j$.
\end{proof}

We call $\Phi(p)$ the \emph{detour semigroup} of $p$. For $s \in \{-1\} \cup [0,1]$ we define the following closed semigroup $S(s)$ in $[0,\infty)$:
\begin{itemize}
\item $S(-1) = \{1\}$,
\item $S(0) = \{0,1\}$,
\item $S(s) = \{0\} \cup \left\{ s^z : \ z \in \mathbb Z\right\}$ for $s \in ]0,1)$, and
\item $S(1)  = [0,\infty)$.
\end{itemize}

\begin{lemma}\label{18-08-18b} Let $S$ be a closed sub-semigroup of $\left( [0,\infty), \ \cdot\right)$ containing $1$ such that every element of $S\backslash \{0\}$ has an inverse in $S$. Then $S = S(s)$ for some $s \in \{-1\} \cup [0,1]$. 
\end{lemma}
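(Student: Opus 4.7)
The plan is to reduce the statement to the classification of closed subgroups of $(\mathbb{R},+)$ via the logarithm.

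First I would observe that the hypotheses make $G := S \setminus \{0\}$ into a subgroup of the multiplicative group $(0,\infty)$: it is closed under multiplication (as $S$ is a semigroup and $0$ is absorbing), contains $1$, and contains inverses by assumption. Next I would check that $G$ is closed in $(0,\infty)$: if $g_n \in G$ converges to $g \in (0,\infty)$, then $g_n \to g$ in $[0,\infty)$ as well, so $g \in S$ because $S$ is closed, and $g \neq 0$ gives $g \in G$.

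Since the logarithm is a homeomorphism from $(0,\infty)$ onto $\mathbb{R}$ taking multiplication to addition, $\log G$ is a closed subgroup of $(\mathbb{R},+)$. By the standard classification, $\log G$ is one of $\{0\}$, $a\mathbb{Z}$ for some $a > 0$, or $\mathbb{R}$.

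I would then split into cases and, in each, recover $S$ by asking whether $0 \in S$. If $\log G = \{0\}$, then $G = \{1\}$, and either $S = \{1\} = S(-1)$ or $S = \{0,1\} = S(0)$. If $\log G = a\mathbb{Z}$ with $a > 0$, set $s := e^{-a} \in (0,1)$; then $G = \{s^z : z \in \mathbb{Z}\}$, and since $s^n \to 0$ as $n \to \infty$, closedness of $S$ forces $0 \in S$, so $S = S(s)$. If $\log G = \mathbb{R}$, then $G = (0,\infty)$ and again closedness forces $0 \in S$, giving $S = [0,\infty) = S(1)$. The only place any care is needed is the final step of confirming $0 \in S$ in the nontrivial cases, which follows immediately from closedness, so I do not expect a real obstacle.
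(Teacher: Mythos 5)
Your proof is correct and follows essentially the same route as the paper: pass to $\log(S\setminus\{0\})$, invoke the classification of closed subgroups of $(\mathbb{R},+)$, and recover $S$ case by case. Your extra remark that closedness of $S$ forces $0\in S$ in the discrete and dense cases is a detail the paper leaves implicit, but the argument is the same.
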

\begin{proof} Note that $H = \left\{ \log t : \ t \in S \backslash \{0\} \right\}$ is a closed additive subgroup of $\mathbb R$ and hence either discrete or equal to $\mathbb R$. If $ H = \{0\}$ it follows that $S = \{1\}$ when $0 \notin S$ while $S= \{0,1\}$ when $0 \in S$. If $H = \mathbb Z\alpha$ for some $\alpha > 0$ it follows that $S = \{0\} \cup \left\{ s^z : \ z \in \mathbb Z\right\}$ with $s= e^{-\alpha}$, and when $H = \mathbb R$ it follows that $S = [0,\infty)$. 
\end{proof}

\begin{corollary}\label{18-08-18a} For $p \in \Wan(\Gamma)$ there is a $s \in \{-1\} \cup [0,1]$ such that $\Phi(p) = S(s)$. 
\end{corollary}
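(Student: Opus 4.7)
The corollary is essentially immediate from combining the two preceding lemmas, so the plan is just to chain them. First I would invoke Lemma \ref{22-05-18}, which tells us that for any $p \in \Wan(\Gamma)$ the set $\Phi(p)$ is a closed sub-semigroup of $([0,\infty),\cdot)$ containing $1$ with the property that every nonzero element has an inverse in $\Phi(p)$. This places $\Phi(p)$ squarely in the class of semigroups to which Lemma \ref{18-08-18b} applies.

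Then I would apply Lemma \ref{18-08-18b} directly to $S = \Phi(p)$. That lemma's classification via $H = \{\log t : t \in S \setminus \{0\}\}$ gives a unique $s \in \{-1\} \cup [0,1]$ for which $\Phi(p) = S(s)$, and this is exactly the conclusion of the corollary.

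There is essentially no obstacle here; the content is that both hypotheses of Lemma \ref{18-08-18b} are verified by Lemma \ref{22-05-18}. If anything needed spelling out, it would only be a sentence noting that the value $s = -1$ corresponds to $0 \notin \Phi(p)$ (i.e.\ $\Phi(p) = \{1\}$), while $s \in [0,1]$ corresponds to $0 \in \Phi(p)$, in agreement with the case split in the proof of Lemma \ref{18-08-18b}.
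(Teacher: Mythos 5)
Your proof is correct and matches the paper's (implicit) argument exactly: the corollary is obtained by applying Lemma \ref{18-08-18b} to the semigroup $S = \Phi(p)$, whose hypotheses are supplied by Lemma \ref{22-05-18}. Your closing remark identifying $s=-1$ with the case $0 \notin \Phi(p)$ is also consistent with the case analysis in the proof of Lemma \ref{18-08-18b}.
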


We set
$$
\Wan^s(\Gamma) = \left\{ p \in \Wan(\Gamma): \ \Phi(p) = S(s) \right\} \ 
$$
for $s \in \{-1\} \cup [0,1]$.

\begin{lemma}\label{26-08-18} There is a Borel map $\chi : \Wan(\Gamma) \to \{-1\} \cup [0,1]$ such that $\Wan^s(\Gamma) = \chi^{-1}(s)$ for all $s \in \{-1\} \cup [0,1]$. In particular, $\Wan^s(\Gamma)$ is a Borel subset of $P(\Gamma)$ for each $s \in \{-1\} \cup [0,1]$.
\end{lemma}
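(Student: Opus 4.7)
The plan is to define $\chi : \Wan(\Gamma) \to \{-1\} \cup [0,1]$ by letting $\chi(p)$ be the unique element of $\{-1\} \cup [0,1]$ with $\Phi(p) = S(\chi(p))$; this is well-defined by Corollary \ref{18-08-18a} and clearly satisfies $\chi^{-1}(s) = \Wan^s(\Gamma)$. To show $\chi$ is Borel my plan is first to establish that $p \mapsto \Phi(p)$ is Borel as a map into the Effros Borel space $\mathcal{F}([0,\infty])$ of closed subsets of the compact space $[0,\infty]$, and then to read off $\chi$ from $\Phi(p)$ by a few Borel operations using the classification in Lemma \ref{18-08-18b}.

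The core technical step is to show that for every open $U \subseteq [0,\infty]$ the set $\{p \in \Wan(\Gamma) : \Phi(p) \cap U \neq \emptyset\}$ is Borel. Fix a countable exhaustion $H_1 \subseteq H_2 \subseteq \cdots$ of $\Gamma_V$ by finite subsets. For each triple $(i,j,\mu)$ and each open $V$, the condition that $(i,j;\mu)$ is a detour on $p$ outside $H_n$ with $e^{\Delta_p(i,j;\mu)} \in V$ is Borel in $p$, being a countable union over the admissible finite middle segments $(p_i,\ldots,p_{j-1})$ of clopen cylinder conditions intersected with the closed tail condition $s(p_m) \notin H_n$ for all $m \geq i$. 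Writing
\[
\omega^{H_n}_p \;=\; \{ e^{\Delta_p(i,j;\mu)} : (i,j;\mu) \text{ is a detour on } p \text{ outside } H_n \},
\]
it follows that $\{p : \omega^{H_n}_p \cap V \neq \emptyset\}$ is Borel for every open $V$, whence $p \mapsto \overline{\omega^{H_n}_p}$ is Borel into $\mathcal{F}([0,\infty])$. Unwinding the definition of $\Phi(p)$ gives $\Phi(p) = \Omega_p \cap \mathrm{inv}(\Omega_p) \cap [0,\infty)$, where $\Omega_p = \bigcap_n \overline{\omega^{H_n}_p}$ and $\mathrm{inv}(t) = 1/t$ on $[0,\infty]$. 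Since the compact sets $K_n(p) := \overline{\omega^{H_n}_p} \cap \mathrm{inv}(\overline{\omega^{H_n}_p})$ decrease with $n$ to $\Omega_p \cap \mathrm{inv}(\Omega_p)$, compactness of $[0,\infty]$ yields, for each open $U \subseteq [0,\infty)$,
\[
\Phi(p) \cap U \neq \emptyset \;\iff\; \exists W \in \mathcal{B} \text{ with } \overline{W} \subseteq U \text{ such that } K_n(p) \cap \overline{W} \neq \emptyset \text{ for every } n,
\]
where $\mathcal{B}$ is a countable basis of $[0,\infty]$. Each condition $K_n(p) \cap \overline{W} \neq \emptyset$ is Borel: by the near-intersection characterization in a compact metric space, it is equivalent to the existence, for every rational $\varepsilon > 0$, of $s_1, s_2 \in \omega^{H_n}_p$ with $d(s_1, \overline{W}) < \varepsilon$ and $d(s_1, \mathrm{inv}(s_2)) < \varepsilon$, which is a countable intersection of countable unions of Borel sets.

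With this in hand I set $\eta(p) := \sup(\Phi(p) \cap [0,1))$, with $\sup \emptyset = 0$; then $\{p : \eta(p) > t\} = \{p : \Phi(p) \cap (t,1) \neq \emptyset\}$ is Borel for each rational $t$, so $\eta$ is Borel. By Lemma \ref{18-08-18b} one has $\chi(p) = \eta(p)$ whenever $\eta(p) > 0$, while on $\{\eta = 0\}$ one has $\chi(p) = 0$ if $0 \in \Phi(p)$ and $\chi(p) = -1$ otherwise; the auxiliary set $\{p : 0 \in \Phi(p)\}$ is Borel by applying the same core method with $U$ a neighborhood of $0$ (respectively $\infty$) in $[0,\infty]$. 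This assembles $\chi$ as a Borel function. The main obstacle is the core step: because $\Phi(p)$ may equal $S(s)$ with $s$ irrational, it need not meet any preselected countable dense subset of the reals, so one cannot simply take a countable union of the Borel sets $\{p : t \in \Phi(p)\}$; the compactness reduction that tracks $\omega^{H_n}_p$ and its image under $\mathrm{inv}$ simultaneously in a decreasing net is what replaces the uncountable existential quantifier over $t \in U$ with a countable conjunction over $n$.
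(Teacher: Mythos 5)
Your proof is correct and follows essentially the same route as the paper: both reduce Borelness of $\chi$ to Borelness of hitting sets $\left\{p \in \Wan(\Gamma) : \Phi(p) \cap S \neq \emptyset\right\}$ (you test against open sets via a decreasing sequence of compacta $K_n(p)$, the paper against compact sets via a decreasing sequence of open neighbourhoods), with the same countable bookkeeping over candidate detours, the exhaustion $H_1 \subseteq H_2 \subseteq \cdots$ of $\Gamma_V$, and the reciprocal values. Your final assembly of $\chi$ from $\eta(p) = \sup\left(\Phi(p) \cap [0,1)\right)$ together with the Borel set $\left\{p : 0 \in \Phi(p)\right\}$ is precisely the paper's definition of $\chi$.
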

\begin{proof} We define $\chi : \Wan(\Gamma) \to \{-1\} \cup [0,1]$ such that
$$
\chi(p) = \begin{cases} -1, \ & \ \text{when} \ 0 \notin \Phi(p) \ , \\ \sup [0,1) \cap \Phi(p) , \ & \ \text{when} \ 0 \in \Phi(p) \ . \end{cases}
$$
Since $\Wan^s(\Gamma) = \chi^{-1}(s)$ it suffices to show that $\chi$ is a Borel map. To this end we first prove
\begin{obs}\label{26-08-18a} Let $K$ be a compact subset of $[0,\infty)$. Then
$$
\left\{p \in \Wan(\Gamma): \ \Phi(p) \cap K \neq \emptyset \right\}$$
 is a Borel subset of $P(\Gamma)$.
\end{obs}
To prove this, choose a sequence $H_1 \subseteq H_2 \subseteq H_3 \subseteq \cdots$ of finite subsets of $\Gamma_V$ such that $\bigcup_n H_n = \Gamma_V$. Let $U_1 \supseteq U_2 \supseteq \cdots$ and $V_1 \supseteq V_2 \supseteq \cdots $ be decreasing open subsets in $[0,\infty]$ such that $\overline{U_{k+1}} \subseteq U_k$ and $ \overline{V_{k+1}} \subseteq V_k$ for all $k$, and
$$
\bigcap_k {U_k} = K \ \ \text{and} \ \ \bigcap_k {V_k} = K^{-1} \ ,
$$
where $K^{-1} = \left\{ \frac{1}{x} : \ x \in K \right\}$. For $n,m,k \in \mathbb N$, let $M(n,m,k)$ be the set of elements $p \in \Wan(\Gamma)$ with the property that there are detours $(i,j;\mu)$ and $(i',j'; \mu')$ on $p[m,\infty)$ with $\mu \cap H_n = \mu' \cap H_n = \emptyset$ such that 
\begin{equation*}\label{31-05-18}
e^{\Delta_p(i,j;\mu)} \in U_k \ , 
\end{equation*}
and
\begin{equation*}\label{31-05-18a}
e^{\Delta_p(i',j';\mu')} \in V_k  \ .
\end{equation*}
For each $p \in M(n,m,k)$ there is a finite path $\mu$ such that 
$$
p \in Z(\mu) \cap \Wan(\Gamma) \subseteq M(n,m,k) \ ,
$$ 
and there is therefore an open subset $W$ in $P(\Gamma)$ such that $ W\cap \Wan(\Gamma) = M(n,m,k)$. $\Wan(\Gamma)$ is a Borel subset of $P(\Gamma)$ by Lemma 4.13 in \cite{Th4} and hence so is $M(n,m,k)$. This completes the proof of Observation \ref{26-08-18a} because
$$
\left\{p \in \Wan(\Gamma): \ \Phi(p) \cap K \neq \emptyset \right\} = \bigcap_{n,m,k} M(n,m,k) \ .
$$
We proceed to the proof that $\chi$ is a Borel map: Note that 
$$
\chi^{-1}(-1) = \Wan^{-1}(\Gamma) = \left\{p \in \Wan(\Gamma) : \ 0 \notin \Phi(p) \right\}
$$
is a Borel set by Observation \ref{26-08-18a}. Let $X = \Wan( \Gamma) \backslash \Wan^{-1}(\Gamma)$. It suffices to show that $X \cap \chi^{-1}([0,\alpha])$ is Borel for all $\alpha \in [0,1]$. For this note first that $X \cap \chi^{-1}([0,1]) = X$. We may therefore assume that $0 \leq \alpha < 1$. Then 
$$
X \cap \chi^{-1}([0,\alpha]) = \bigcap_n A_n \ ,
$$
where 
$$
A_n = \left\{ p \in X : \ \left[\alpha + \frac{1}{n}, 1- \frac{1}{n}\right] \cap \Phi(p) = \emptyset \right\}
$$
which is a Borel set by Observation \ref{26-08-18a}. 
\end{proof}

\subsection{Statement of the main result}\label{main}
 Let $\Gamma$ be a directed graph such that $C^*(\Gamma)$ is simple and consider a generalized gauge action $\alpha^F$ on $C^*(\Gamma)$. Let $\psi$ be an extremal dissipative $\beta$-KMS weight for $\alpha^F$ and $M(\psi)$ the corresponding von Neumann algebra factor. Let $m^{\psi}$ be the Borel measure on $\mathcal G^0$ determined by \eqref{16-06-18}. 

\begin{proposition}\label{22-04-18e} There is a $s\in \{-1\} \cup [0,1]$ such that $m^{\psi}$ is concentrated on $\Wan^s(\Gamma)$.
\end{proposition}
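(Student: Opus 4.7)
The plan is to show that the Borel function $\chi$ from Lemma \ref{26-08-18} is essentially constant with respect to $m^{\psi}$, using ergodicity of the conformal measure of an extremal KMS weight. Explicitly, I aim to establish (a) tail-invariance of $\chi$ on $\Wan(\Gamma)$ and (b) ergodicity of $m^{\psi}$ for the tail equivalence relation, and then conclude via the standard argument that an invariant Borel function into a standard Borel space must be almost everywhere constant under an ergodic equivalence relation.

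For step (a), I would argue that $\Phi$ depends only on the tail of a wandering path. Suppose $q = p[N,\infty)$. Each detour $(k,\ell;\mu)$ on $q$ corresponds to a detour $(k+N-1,\ell+N-1;\mu)$ on $p$ with identical $\Delta$-value, so detours on $q$ biject with detours on $p$ starting at index $\ge N$. Given any finite $H \subseteq \Gamma_V$, enlarge it to $H' = H \cup \{s(p_1),\dots,s(p_N)\}$, which is still finite; then a detour on $p$ outside $H'$ must satisfy $i > N$ (since $s(p_i) \notin H'$ forces $i > N$), and under the bijection it corresponds to a detour on $q$ outside $H'$. Conversely every detour on $q$ outside $H$ comes from a detour on $p$ outside $H$ with $i \ge N$. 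Since the definition of $\Phi$ quantifies over all finite $H$, we obtain $\Phi(p) = \Phi(q)$. Iterating and using that the tail equivalence relation on $P(\Gamma)$ is generated by the pairs $(p, p[N,\infty))$, it follows that $\Phi$, and therefore $\chi$, is constant on every tail equivalence class in $\Wan(\Gamma)$.

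For step (b), extremality of the $\beta$-KMS weight $\psi$ translates, via the correspondence between KMS weights and quasi-invariant measures on $\mathcal G^0$ with Radon--Nikodym cocycle $e^{-\beta c_F}$ established in the earlier papers \cite{Th2,Th4}, into ergodicity of $m^{\psi}$ for the orbit equivalence relation of $\mathcal G$. On $P(\Gamma)$ this orbit equivalence relation is exactly the tail equivalence relation, since $g = (\mu x, |\mu|-|\mu'|,\mu' x) \in \mathcal G$ has $r(g) = \mu x$ and $s(g) = \mu' x$. Hence every tail-invariant Borel subset of $\Wan(\Gamma)$ has $m^{\psi}$-measure $0$ or $m^{\psi}(\Wan(\Gamma))$.

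To conclude, for each Borel $B \subseteq \{-1\} \cup [0,1]$ the set $\chi^{-1}(B)$ is tail-invariant by (a), so by (b) the pushforward $\chi_* m^{\psi}$ assigns only the values $0$ and $m^{\psi}(\Wan(\Gamma))$ to Borel sets of the standard Borel space $\{-1\} \cup [0,1]$. Such a measure is concentrated at a single point $s \in \{-1\} \cup [0,1]$, giving $m^{\psi}(\Wan(\Gamma) \setminus \Wan^s(\Gamma)) = 0$, which together with the assumption that $m^{\psi}$ is concentrated on $\Wan(\Gamma)$ yields the proposition. The main technical nuisance I anticipate is the careful bookkeeping in step (a), where one has to absorb the finite initial segment of $p$ into the forbidden set $H$ in order to turn a statement about detours on $p$ into one about detours on $q$; the ergodicity input in step (b) is essentially a black box from the earlier work.
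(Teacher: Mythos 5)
Your proposal is correct and follows essentially the same route as the paper: the paper observes that $\Phi(\sigma(p))=\Phi(p)$, so $\chi\circ\sigma=\chi$, and then invokes ergodicity of $m^{\psi}$ for the shift (Theorem 4.15 of \cite{Th4}) together with the fact that a shift-invariant Borel function is a.e.\ constant (Lemma 4.12 of \cite{Th4}). Your use of tail-equivalence invariance and ergodicity of the orbit equivalence relation of $\mathcal G$ is equivalent to the paper's shift formulation (invariant sets coincide), and your bookkeeping in step (a) just spells out the invariance the paper takes as immediate from the definition.
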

\begin{proof} Let $\sigma$ denote the shift map on $\Wan(\Gamma)$ and let $\chi : \Wan(\Gamma) \to \{-1\} \cup [0,1]$ be the Borel map from Lemma \ref{26-08-18}. It follows from the definition of the detour semigroup that $\Phi(\sigma(p)) = \Phi(p)$ and hence that $\chi\circ \sigma = \chi$. Define $T : P(\Gamma) \to \{-2,-1\} \cup [0,1]$ such that
$$
T(p) = \begin{cases} -2, & \ p \notin \Wan(\Gamma) \\ \chi(p), & \ p \in \Wan(\Gamma) \ . \end{cases}
$$
$T$ is a Borel map and $T \circ \sigma = T$. Since $m^{\psi}$ is ergodic for $\sigma$ by Theorem 4.15 in \cite{Th4} it follows from Lemma 4.12 in \cite{Th4} that $m^{\psi}$ is concentrated on $T^{-1}(s)$ for some $s\in \{-2,-1\} \cup [0,1]$. Since $m^{\psi}$ is concentrated on $\Wan(\Gamma)$ it follows that $s \in \{-1\} \cup [0,1]$.
\end{proof}

When $\beta =0$ the weight $\tilde{\psi}$ is a densely defined lower-semicontinuous faithful trace on $M(\psi)$ which is therefore semi-finite, i.e. either a type $I$ or type $II$ factor. We consider in the following only the cases $\beta \neq 0$ and we use the convention that $0^{\beta} = 0$. The main result of the paper is

\begin{theorem}\label{22-05-18d} Assume $\beta \neq 0$. Let $s \in \{-1\} \cup [0,1]$ and assume that $m^{\psi}$ is concentrated on $\Wan^s(\Gamma)$. Then
\begin{itemize}
\item $M(\psi)$ is of type $I$ or $II$ when $s= -1$, and
\item $M(\psi)$ is of type $III_{\lambda}$ where $\lambda = s^{|\beta|}$ when $s \in [0,1]$.
\end{itemize}
\end{theorem}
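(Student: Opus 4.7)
The plan is to realise $M(\psi)$ (up to stable isomorphism) as the von Neumann algebra of a countable Borel equivalence relation in the sense of Feldman--Moore \cite{FM1,FM2} and then compute $S(M(\psi))$ as the essential range of the Radon--Nikodym cocycle on that relation, matching it to $\Phi(p)^{|\beta|}$. First, as explained in Section~2, replace $M(\psi)$ by the corner $\pi_{\psi}(p_v)M(\psi)\pi_{\psi}(p_v)\simeq \pi_{\phi}(p_v C^*(\Gamma) p_v)''$, where $p_v$ is the projection in the fixed point algebra of $\alpha^F$ corresponding to a fixed vertex $v$ that is reached by $m^{\psi}$-a.e.\ path. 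Writing the corner as $M(R,\omega)$ for the tail equivalence relation $R$ restricted to $\Wan^s(\Gamma)\cap Z(v)$, the modular group of the extended state corresponds to the $1$-cocycle $\delta\colon R\to\mathbb R_{+}^*$ given by
\begin{equation*}
\delta(\mu x,\mu' x)=e^{-\beta(F(\mu)-F(\mu'))}\,,
\end{equation*}
since $m^{\psi}$ is $e^{-\beta c_F}$-conformal.

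Next I would invoke Connes' characterisation of the $S$-invariant (Section~III of \cite{Co} in the formulation suited to Krieger-type constructions, see \cite{FM2}): $S(M(\psi))\cap\mathbb R_{+}^*$ equals the intersection, over all $m^{\psi}$-positive Borel subsets $A\subseteq\Wan^s(\Gamma)\cap Z(v)$, of the essential range of $\delta$ on $R\cap(A\times A)$. The point here is that $M(\psi)$ is a factor (Lemma~4.9 in \cite{Th1}), so the flow of weights is ergodic and this essential range is a closed sub-semigroup of $[0,\infty)$. The assertion will follow once I show this semigroup equals $S(s)^{|\beta|}$ and then translate via the standard dictionary between the $S$-invariant and the type classification.

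The core computation is the identification of this essential range with $\Phi(p)^{\beta}$ for $m^{\psi}$-a.e.\ $p$. A detour $(i,j;\mu)$ on $p$ outside of a finite vertex set $H$ produces two elements of the same tail class, namely $p$ itself and $\mu\, p[j,\infty)$, linked in $R$ by the cocycle value $e^{-\beta(F(\mu)-F(p[i,j)))}=e^{\beta\Delta_p(i,j;\mu)}$, and the paths $\mu p[j,\infty)$ stay in $Z(v)\cap\Wan(\Gamma)$ when $H$ is chosen to contain $v$. Conversely, any two tail-equivalent paths $p,q\in\Wan^s(\Gamma)$ agree from some coordinate onward and hence determine a detour on $p$, whose height equals the value of $\delta$; by shifting the cut point far enough along $p$ we arrange that the detour lies outside any prescribed finite set $H$. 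Combined with ergodicity (Theorem~4.15 in \cite{Th4}) and the fact that $\chi\circ\sigma=\chi$, which lets me restrict to $\Wan^s(\Gamma)$ without losing measure, this shows that localising $R$ to a Borel set $A$ of positive measure never removes any $t\in\Phi(p)^{\beta}$ and never adds anything: for every essential value of $\delta|_{A\times A}$ one can find, for $m^{\psi}$-a.e.\ $p\in A$ and every neighbourhood of $t$, approximating detours outside arbitrarily large $H$. Since $\Phi(p)\setminus\{0\}$ is a multiplicative group (Lemma~\ref{22-05-18}) it is invariant under $t\mapsto t^{-1}$, whence $\Phi(p)^{\beta}=\Phi(p)^{|\beta|}=S(s)^{|\beta|}$ by Corollary~\ref{18-08-18a}.

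Finally, translating through the $S$-invariant: $S(s)^{|\beta|}=\{1\}$ when $s=-1$, giving a semifinite factor (type~$I$ or $II$) because $\beta\neq 0$ forces faithfulness but not necessarily finiteness of the trace; $S(s)^{|\beta|}=\{0\}\cup\{s^{n|\beta|}:n\in\mathbb Z\}$ for $s\in (0,1)$, so $M(\psi)$ is of type $III_{s^{|\beta|}}$; $S(0)^{|\beta|}=\{0,1\}$, giving type $III_0$; and $S(1)^{|\beta|}=[0,\infty)$, giving type $III_1$. I expect the principal obstacle to be the careful two-sided comparison in the previous paragraph: on the one hand, turning a detour outside of $H$ into a genuine contribution to the essential range on a positive-measure set requires a measurable selection argument and the use of ergodicity; on the other hand, excluding spurious essential values not of the form $e^{\beta\Delta_p(i,j;\mu)}$ requires showing that every element of the essential range is realised pointwise by detours whose source vertex escapes any finite $H$, which is where the wandering hypothesis and the Borel structure from Lemma~\ref{26-08-18} enter decisively.
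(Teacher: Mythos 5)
Your overall route is the one the paper takes: cut down to the corner over a fixed vertex $v$, identify it with the Feldman--Moore algebra ${\bf M}(R)$ of tail equivalence on $(\Wan(Z(v)),m)$, and compute the $S$-invariant as the localised essential range of the Radon--Nikodym cocycle $e^{-\beta c}$, matched against the detour semigroup. The two halves of your ``two-sided comparison'' are exactly the paper's Lemma~\ref{24-08-18} (no spurious essential values, proved by covering $\Wan(Z(v))$ up to a null set by countably many Borel sets $A(\nu,a,b,c,\pm)$ on which the cocycle avoids a neighbourhood of a given $t\notin S(s)$) and Observation~\ref{21-08-18} (every $t\in S(s)$ survives localisation, proved first for relatively open sets -- where it is essentially the definition of $\Phi(p)$ -- and then for arbitrary Borel sets by outer regularity and the equivalence of the left and right counting measures, not by measurable selection). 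A small slip along the way: for a detour $(i,j;\mu)$ outside of $H\ni v$ the tail-equivalent companion of $p$ is $p[1,i)\,\mu\,p[j,\infty)$, not $\mu\,p[j,\infty)$; the latter cannot start at $v$ precisely because $\mu$ avoids $H$.

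The genuine gap is the determination of whether $0\in S(M(\psi))$, that is, the separation of the semifinite case $s=-1$ from the type $III_0$ case $s=0$. The characterisation you invoke identifies only $S(M(\psi))\cap\mathbb R_{+}^{*}$ with the intersection of essential ranges (this is how it enters the paper: Proposition 2.11 of \cite{FM2} gives $S({\bf M}(R))\setminus\{0\}=r^*(D_\beta)$), and that intersection equals $\{1\}$ both when $s=-1$ and when $s=0$. Your closing dictionary silently upgrades the computed set to the full $S$-invariant, which is exactly what needs proof; the parenthetical justification ``$\beta\neq 0$ forces faithfulness but not necessarily finiteness of the trace'' presupposes the trace whose existence is at stake. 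The paper supplies the missing step as Lemma~\ref{22-08-18}: $M(\psi)$ is semifinite iff $0\notin S(s)$. This is proved via Connes' criterion that semifiniteness is equivalent to innerness of the modular group, which for ${\bf M}(R)$ (using that $L^{\infty}(\Wan(Z(v)))$ is maximal abelian and fixed by the modular group) translates into the cocycle $c$ being a coboundary $H(x)-H(y)$ on a set of positive measure; one direction is then contradicted by Observation~\ref{21-08-18} with $U=[0,e^{-3R})$ when $0\in S(s)$, and the other uses boundedness of $c$ on $R\cap(B\times B)$ (from Lemma~\ref{24-08-18} with $t=0$) together with Theorem 6 of \cite{FM2} to produce such a coboundary. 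Without some version of this argument you cannot conclude ``semifinite'' for $s=-1$ or ``$III_0$'' for $s=0$.
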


The proof of Theorem \ref{22-05-18d} will be given in the following subsection.

\subsection{Tail equivalence and the proof of the main result}\label{tail}
Fix a vertex $v\in \Gamma_V$.  The projection $P_v = 1_{Z(v)} \in C_c(\mathcal G^0)$ is fixed by $\alpha^F$ and it is full since $C^*(\Gamma)$ is simple and we can therefore consider the state
$$
\phi = \psi(P_v)^{-1} \psi|_{P_vC^*(\Gamma)P_v} ,
$$
which is a $\beta$-KMS state for the restriction of $\alpha^F$ to $P_vC^*(\Gamma)P_v$. This corner $P_vC^*_r(\mathcal G)P_v$ is the $C^*$-algebra of the reduction 
$$
\mathcal G_v = 
\left\{ \left( \mu x, |\mu| -|\mu'|, \mu'x\right) \in \mathcal G: \ s(\mu) = s(\mu') = v \right\} \ ;
$$ 
i.e. $C^*_r\left(\mathcal G_v\right) = P_vC_r^*(\mathcal G)P_v$. Note that $(\Wan(\Gamma),m^{\psi })$ is a standard Borel space. Recall that two infinite paths $p,q \in P(\Gamma)$ are \emph{tail-equivalent} when there are natural numbers $n,m \in \mathbb N$ such that $p_{i+n} = q_{i+m}$ for all $i \geq 0$. We write $p  \sim q$ when this holds. Set
$$
\Wan(Z(v)) = \left\{ p \in \Wan(\Gamma) : \ s(p) = v \right\}
$$
which is a Borel subset of $\Wan(\Gamma)$. On $\Wan(Z(v))$ we consider the Borel probability measure
$$
m = m^{\psi}(Z(v))^{-1}m^{\psi} = \psi(P_v)^{-1} m^{\psi} \ .
$$
Tail-equivalence is then a countable standard equivalence relation on $\left(\Wan(Z(v)), m\right)$ which we identify with the Borel subset
$$
R = \left\{ (p,q) \in \Wan(Z(v)) \times  \Wan (Z(v)) : \ p \sim q \right\} \ 
$$
of $\Wan (Z(v))  \times  \Wan(Z(v))$, cf. \cite{FM1}. Tail-equivalence in $\Wan(Z(v))$ gives therefore rise to a von Neumann algebra ${\bf M}(R)$ as explained in Section 2 of \cite{FM2}. To relate this algebra to $M(\psi)$ observe that when $p,q \in \Wan(\Gamma)$ are tail-equivalent there is a unique integer $k(p,q) \in \mathbb Z$ such that 
$$
p_{i+k(p,q)} = q_i
$$ 
for all sufficiently large $i$. The resulting map $k : R \to \mathbb Z$ is a cocycle, and we can define 
$\Psi : R \to \mathcal G_v$ such that
$$
\Psi(p,q) = (p, k(p,q), q) \ .
$$ 
Then $\Psi$ is a Borel isomorphism and a groupoid isomorphism of $R$ onto the reduction $\mathcal G|_{\Wan(Z(v))}$ of $\mathcal G$. Note that when $f \in C_c(\mathcal G_v)$ the function $f \circ \Psi$ is a left finite function on $R$ as defined in \cite{FM2}, and that $f \to f \circ \Psi$ is a $*$-homomorphism from $C_c(\mathcal G_v)$ into the $*$-algebra of left finite functions of \cite{FM2}. Let $\nu_r$ be the measure on $R$ defined such that
$$
\nu_r(C) = \int_{\Wan(Z(v))} \# \left\{ y \in \Wan(Z(v)): \ (y,x) \in C \right\} \ \mathrm{d} m(x) \ .
$$ 
This is the right counting measure on $R$ defined in \cite{FM1}.
Since the $*$-algebra of left finite functions acts as bounded operators on $L^2(R,\nu_r)$  
we obtain a $*$-homomorphism $\pi : C_c(\mathcal G_v) \to B\left(L^2(R,\nu_r)\right)$. Let $u \in L^2(R,\nu_r)$ be the characteristic function of the diagonal in $R$, i.e.
$$
u(p,q) = \begin{cases} 1 & \ \text{when} \ p = q \\ 0 & \ \text{when} \ p \neq q \ . \end{cases}
$$
 It is straightforward to verify that
\begin{itemize}
\item[i)] $u$ is a cyclic vector for $\pi$, i.e. $\pi\left(C_c(\mathcal G_v)\right)u$ is dense in $L^2(R,\nu_r)$, 
\item[ii)] $\pi\left(C_c(\mathcal G_v)\right)$ is dense in ${\bf M}(R)$ for the strong operator topology, and
\item[iii)] $\left< \pi(f)u,u\right> = \phi(f)$ for all $f \in C_c\left(\mathcal G_v\right)$.
\end{itemize}
Let $\left(\pi_{\phi}, H_{\phi},u_{\phi}\right)$ be the GNS-triple arising from the state $\phi$. It follows from i) and iii) that there is a unitary $W : H_{\phi} \to L^2(R,\nu_r)$ such that $W^*\pi_{\phi}(f)W = \pi(f)$ for all $f \in C_c(\mathcal G_v)$. Combined with ii) this gives us an isomorphism
\begin{equation*}\label{14-08-18}
\pi_{\phi}\left(P_vC^*(\Gamma)P_v\right)'' \simeq {\bf M}(R) \ 
\end{equation*}
when we use the identification $C^*(\mathcal G_v) = P_vC^*(\Gamma)P_v$. Since 
$$
\pi_{\phi}\left(P_vC^*(\Gamma)P_v\right)'' \simeq \pi_{\psi}(P_v)M(\psi)\pi_{\psi}(P_v)
$$ 
by Lemma 2.3 in \cite{Th5} it follows that $ {\bf M}(R)$ is a corner in $M(\psi)$; more precisely,
$$
\pi_{\psi}(P_v)M(\psi)\pi_{\psi}(P_v) \simeq  {\bf M}(R) \ .
$$
It follows then from \cite{Co}, Corollaire 3.2.8(b), that 
\begin{equation}\label{15-08-18}
S\left(M(\psi)\right) = S\left(\pi_{\psi}(P_v)M(\psi)\pi_{\psi}(P_v)\right) = S\left( {\bf M}(R)\right) \ .
\end{equation}
We define a cocycle $c : R \to \mathbb R$ such that
$$
c(p,q) = \lim_{n \to \infty} F(p[1,n+k(p,q)]) - F(q[1,n]) \ .
$$
Observe that $c_F \circ \Psi = c$. The Radon-Nikodym derivative $D_{\beta}$ of $m$ with respect to $R$, as defined in Definition 2.1 of \cite{FM1}, is given by
$$
D_{\beta}(p,q) = e^{-\beta c(p,q)} \ .
$$ 
By combining \eqref{15-08-18} with Proposition 2.11 in \cite{FM2} we conclude that $S\left(M(\psi)\right) \backslash \{0\}$ is the essential range $r^*(D_{\beta})$ of $D_{\beta}$. It follows that
\begin{equation*}\label{20-08-18}
S\left(M(\psi)\right)  \backslash \{0\} = r^*(D_{\beta}) = r^*(D_1)^{\beta} = \left\{ x^{\beta} : \ x \in r^*(D_1)\right\} \ .
\end{equation*}
Combined with Lemme 3.1.2 in \cite{Co} we see that
\begin{equation}\label{22-08-18e}
S\left(M(\psi)\right)   = \left\{ x^{\beta} : \ x \in r^*(D_1)\right\} \cup \{0\} \ 
\end{equation}
unless $M(\psi)$ is semi-finite in which case $S(M(\psi)) = \{1\}$.

\bigskip

\emph{Proof of Theorem \ref{22-05-18d}:} The proof will be divided into smaller steps, organized as lemmas. Since $m^{\psi}$ is concentrated on $\Wan^s(\Gamma)$ by assumption we have that $\Phi(p) = S(s)$ for $m^{\psi}$-almost all $p \in \Wan(\Gamma)$.

\begin{lemma}\label{24-08-18} Let $t \in [0,\infty) \backslash S(s)$ and let $U$ be an open neigbourhood of $t$ in $[0,\infty)$. There is a sequence $B_n$ of Borel sets in $\Wan(Z(v))$ such that 
\begin{itemize}
\item $m(B_n) > 0$ for all $n$,
\item $m\left(\bigcup_n B_n\right) = 1$,
\item for all $n$ there is an open neighboúrhood $U_n$ of $t$ such that $e^{c(x,y)} \notin U_n$ for all $(x,y) \in R \cap \left(B_n \times B_n\right)$.
\end{itemize}
\end{lemma}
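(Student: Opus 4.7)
The plan is to translate the essential-range question for the cocycle $e^c$ on $R$ into the detour condition defining $\Phi(p)$, by realizing each value $e^{c(x,y)}$ as $e^{\Delta_x(i,j;\mu)}$ for a detour on the single path $x$ that is outside a prescribed finite vertex set. The key observation is the following \emph{cocycle-detour correspondence}. Suppose $x, y \in \Wan(Z(v))$ with $x \sim y$ share a common finite prefix $\xi$, and decompose $x = \xi \nu_1 z$ and $y = \xi \nu_2 z$ for a common infinite tail $z$ (chosen so that $|\nu_1|, |\nu_2| \geq 1$). A direct computation using $k(x,y) = |\nu_1| - |\nu_2|$ yields
\begin{equation*}
c(x,y) = F(\nu_1) - F(\nu_2) = \Delta_x\left(|\xi|+1, \ |\xi|+|\nu_1|+1; \ \nu_2\right),
\end{equation*}
and symmetrically $-c(x,y) = \Delta_y(|\xi|+1,\, |\xi|+|\nu_2|+1;\, \nu_1)$. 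If in addition $x[|\xi|+1,\infty) \cap H = y[|\xi|+1,\infty) \cap H = \emptyset$, both of these detours are automatically outside $H$.

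The candidate Borel sets are indexed by triples $(H,\xi,W)$: $H \subseteq \Gamma_V$ finite, $\xi$ a finite path with $s(\xi) = v$, and $W$ a basic open set from a fixed countable collection — either an open neighbourhood of $t$ contained in $U$, or an open neighbourhood of $1/t$ in $[0,\infty]$ chosen so that $W^{-1} \subseteq U$. Set
\begin{equation*}
B_{H,\xi,W} \ = \ Z(\xi) \cap \Wan(Z(v)) \cap \{p : p[|\xi|+1,\infty) \cap H = \emptyset\} \cap \{p : e^{\Delta_p(i,j;\mu)} \notin W \text{ for every detour } (i,j;\mu) \text{ on } p \text{ outside } H\}.
\end{equation*}
Each such $B_{H,\xi,W}$ is Borel: the third factor is a countable intersection of clopen conditions on the vertices $s(p_k)$ for $k > |\xi|$, and the fourth is a countable intersection of clopen conditions indexed by the countably many triples $(i,j,\mu)$.

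By Proposition \ref{22-04-18e}, $\Phi(p) = S(s) \not\ni t$ for $m$-almost every $p \in \Wan(Z(v))$. Unpacking the definition of $\Phi(p)$, the failure of $t \in \Phi(p)$ produces a finite $H$ and a basic $W$ of one of the two types above such that no detour on $p$ outside $H$ has $e^{\Delta} \in W$. Since $p$ is wandering, choose $N$ with $p[N,\infty) \cap H = \emptyset$ and set $\xi = p[1,N)$; then $p \in B_{H,\xi,W}$. Enumerate the countably many triples with $m(B_{H,\xi,W}) > 0$ as $B_n$; discarding null sets leaves $m\left(\bigcup_n B_n\right) = 1$. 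For each $n$ set $U_n = W_n$ when $W_n$ is a neighbourhood of $t$, and $U_n = W_n^{-1}$ when $W_n$ is a neighbourhood of $1/t$; in either case $U_n$ is an open neighbourhood of $t$ contained in $U$. For $(x,y) \in R \cap (B_n \times B_n)$, the cocycle-detour correspondence applied with the shared prefix $\xi$ of $B_n$ produces a detour on $x$ (respectively on $y$) outside $H_n$ with ratio $e^{c(x,y)}$ (respectively $e^{-c(x,y)}$), and this ratio must avoid $W_n$ by definition of $B_n$; hence $e^{c(x,y)} \notin U_n$.

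The main obstacle is the cocycle-detour correspondence: one needs a detour on a single path realizing $c(x,y)$ exactly while simultaneously lying outside $H$. This is handled by pushing the detour's starting index past the initial segment where $x$ or $y$ may visit $H$, which works because both paths are wandering and hence have finite intersection with any finite $H$.
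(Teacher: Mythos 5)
Your proposal is correct and follows essentially the same route as the paper: the paper's sets $A(\nu,a,b,c,\pm)$ play exactly the role of your $B_{H,\xi,W}$ (a common prefix, a finite vertex set avoided by the tail, and a forbidden basic neighbourhood of $t$ or of $1/t$), and the paper likewise realizes $c(x,y)$ as $\pm\Delta$ of a detour on one of the two paths past the shared prefix, using the detour on $x$ for the neighbourhood of $t$ and the detour on $y$ for the neighbourhood of $1/t$. The only differences are bookkeeping (the paper packages the prefix condition via the shift $\sigma$), so there is nothing further to add.
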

\begin{proof} Let $H_1 \subseteq H_2  \subseteq H_3 \subseteq \cdots$ be a sequence of finite sets in $\Gamma_V$ such that $\Gamma_V = \bigcup_n H_n$. Let $U_1 \supseteq U_2 \supseteq \cdots$ be a decreasing neighbourhood base at $t$ in $[0,\infty)$ and set
$$
U_k^{-1} = \left\{ \frac{1}{x} : \ x \in U_k \right\} \ .
$$
For $a,b,c \in \mathbb N$ let $A(a,b,c,+)$ (resp. $A(a,b,c,-)$) be the set of elements $p \in \Wan(Z(v))$ such that $p[a,\infty) \cap H_b = \emptyset$ and for all detours $(i,j;\mu)$ on $p[a,\infty)$ outside of $H_b$ we have that $e^{\Delta_p(i,j;\mu)} \notin U_c$ (resp. $e^{\Delta_p(i,j;\mu)} \notin U_c^{-1}$). $A(a,b,c,\pm)$ is a relatively closed subset of $\Wan(Z(v))$ and hence a Borel set. The collection of sets 
$$
A(\nu, a,b,c, \pm) = Z(\nu) \cap \sigma^{-|\nu|}\left( \sigma^a \left(A(a,b,c, \pm)\right)\right) \ , 
$$
where $\nu$ runs over all $\nu \in P_f(\Gamma)$ with $s(\nu) = v$ and $a,b,c \in \mathbb N$, is a countable collection of Borel sets in $\Wan(Z(v))$.
Since $t \notin S(s)$,
$$
\Wan^s(\Gamma) \cap \Wan(Z(v))  \subseteq \bigcup_{a,b,c,\pm} A(a,b,c,\pm) \ .
$$
Since
$$
A(a,b,c,\pm) \ \ = \  \bigcup_{\nu \in P_f(\Gamma) , \ s(\nu) = v} \ A(\nu, a,b,c, \pm) \ ,
$$
and since $m$ is concentrated on $\Wan^s(\Gamma) \cap \Wan(Z(v))$ it suffices to check that 
\begin{equation*}\label{24-08-18a}
e^{c(x,y)} \notin U_c 
\end{equation*}
when $(x,y) \in R \cap \left( A(\nu, a,b,c, \pm) \times A(\nu, a,b,c,\pm)\right)$. Let therefore $(x,y)$ be an element in $R \cap \left( A(\nu, a,b,c, -) \times A(\nu, a,b,c,-)\right)$. There are $n,k > |\nu|+1$ such that $x_{i + n} = y_{i+k}$ for all $i \geq 0$. Then
$$
(|\nu|+1,k;x[|\nu|+1,n))
$$
is a detour on $y[a,\infty)$ outside of $H_b$ and hence
$$
e^{\Delta_y(|\nu|+1,k;x[|\nu|+1,n))} \notin U_c^{-1} \ .
$$
Since $c(x,y) = - \Delta_y(|\nu|+1,k;x[|\nu|+1,n))$ it follows that $e^{c(x,y)} \notin U_c$. A similar argument works when $(x,y) \in R \cap \left( A(\nu, a,b,c, +) \times A(\nu, a,b,c,+)\right)$; just use the detour $(|\nu|+1,n;y[|\nu|+1,k))$ on $x[a,\infty)$ instead.

\end{proof}

\begin{lemma}\label{18-08-18d} $r^*(D_1) \subseteq S(s) \subseteq r^*(D_1) \cup \{0\}$ \ .
\end{lemma}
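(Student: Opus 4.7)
Plan. The two inclusions need quite different arguments, and I would handle them separately.

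\textbf{First inclusion, $r^*(D_1)\subseteq S(s)$.} Let $t\in[0,\infty)\setminus S(s)$. By Lemma~\ref{18-08-18b}, $S(s)\setminus\{0\}$ is closed under inversion, so $1/t\notin S(s)$ as well. Applying Lemma~\ref{24-08-18} to $1/t$ produces Borel sets $B_n\subseteq\Wan(Z(v))$ with $m(B_n)>0$, $m(\bigcup_n B_n)=1$, and open neighbourhoods $V_n$ of $1/t$ such that $e^{c(x,y)}\notin V_n$ for every $(x,y)\in R\cap(B_n\times B_n)$. Since $D_1=e^{-c}$, on each $R\cap(B_n\times B_n)$ the cocycle $D_1$ avoids the open neighbourhood $V_n^{-1}$ of $t$. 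Fixing any single $n$ and setting $A=B_n$, $U=V_n^{-1}$, the set $R\cap D_1^{-1}(U)\cap(A\times A)$ is empty and in particular $\nu_r$-null. By the characterisation of the essential range used in the derivation of \eqref{22-08-18e}, this witnesses $t\notin r^*(D_1)$.

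\textbf{Second inclusion, $S(s)\setminus\{0\}\subseteq r^*(D_1)$.} Fix $t\in S(s)\setminus\{0\}$, a Borel set $A\subseteq\Wan(Z(v))$ with $m(A)>0$, and an open neighbourhood $U$ of $t$; choose an open neighbourhood $V$ of $1/t$ with $V^{-1}\subseteq U$. It suffices to produce a partial Borel isomorphism $\phi\colon B\to\phi(B)\subseteq A$ of $R$ with $B\subseteq A$, $m(B)>0$, and $D_1(x,\phi(x))\in U$ for every $x\in B$, since the graph of such a $\phi$ has positive $\nu_r$-measure and sits inside $R\cap D_1^{-1}(U)\cap(A\times A)$. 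By Proposition~\ref{22-04-18e} together with the Lebesgue differentiation theorem applied to $m$ along the countable basis of compact open cylinder sets, $m$-a.e.\ $p\in A$ lies in $\Wan^s(\Gamma)$ and satisfies $m(A\cap Z(p[1,n)))/m(Z(p[1,n)))\to 1$; fix such a $p$. Because $t\in\Phi(p)=S(s)$, for every finite $H\subseteq\Gamma_V$ the definition of $\Phi(p)$ furnishes a detour $(i,j;\mu)$ on $p$ outside $H$ with $e^{\Delta_p(i,j;\mu)}\in V$. With $\alpha=p[1,j)$ and $\beta=p[1,i)\mu$, the map $\phi_{\alpha,\beta}(\alpha y)=\beta y$ is a Borel isomorphism $Z(\alpha)\to Z(\beta)$ whose graph is in $R$ and on which $D_1(x,\phi_{\alpha,\beta}(x))=e^{-\Delta_p(i,j;\mu)}\in V^{-1}\subseteq U$ is constant. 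The $\beta$-conformality of $m$ forces $\phi_{\alpha,\beta}$ to rescale $m$ by the constant $m(Z(\beta))/m(Z(\alpha))$, and a direct inclusion-exclusion bound then yields
\[
\frac{m\bigl(Z(\alpha)\cap A\cap\phi_{\alpha,\beta}^{-1}(A)\bigr)}{m(Z(\alpha))}\ \geq\ \frac{m(A\cap Z(\alpha))}{m(Z(\alpha))}+\frac{m(A\cap Z(\beta))}{m(Z(\beta))}-1.
\]
Hence, provided the detour can be chosen so that $A$ has density close to $1$ in both $Z(\alpha)$ and $Z(\beta)$, the set $B=Z(\alpha)\cap A\cap\phi_{\alpha,\beta}^{-1}(A)$ has positive measure and the argument is complete.

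\textbf{Main obstacle.} Density of $A$ in $Z(\alpha)=Z(p[1,j))$ is automatic from the Lebesgue density choice of $p$ together with the freedom to send $i,j$ to infinity by enlarging $H$. The hard requirement is density of $A$ in the target cylinder $Z(\beta)=Z(p[1,i)\mu)$, since $\beta$ is built from the inserted detour path $\mu$ and need not be a prefix of any density-$1$ point of $A$. My plan is to select, in addition to $p$, a second Lebesgue density point $p'\in A\cap\Wan^s(\Gamma)$ sharing an arbitrarily long initial segment $p[1,i)$ with $p$ (such $p'$ exists because $m(A\cap Z(p[1,i)))/m(Z(p[1,i)))\to 1$), and then to choose the detour on $p$ so that the inserted path $\mu$ is synchronised with a segment of $p'$ past $p[1,i)$, thereby forcing $\beta$ to be a prefix of $p'$ and hence $Z(\beta)\cap A$ to have density close to $1$. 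Producing such a matched detour whose cocycle value $\Delta_p(i,j;\mu)$ nevertheless lies in $\log V$ is the technical heart of the proof: it requires combining $t\in\Phi(p)=\Phi(p')=S(s)$ with the density of detours outside arbitrarily large finite vertex sets and a careful control of how the endpoint vertex of $\mu$ can be joined back to the path $p$.
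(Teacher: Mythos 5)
Your first inclusion is essentially the paper's argument (Lemma~\ref{24-08-18} combined with Proposition 8.4 of \cite{FM1}); the detour through $1/t$ is unnecessary --- since $R$ is symmetric and $c(y,x)=-c(x,y)$, the conclusion $e^{c(x,y)}\notin U_n$ on all of $R\cap(B_n\times B_n)$ already forces $D_1=e^{-c}$ to avoid $U_n$ there --- and it breaks at $t=0$, where $1/t\notin[0,\infty)$. Those are minor. The genuine gap is in the second inclusion, and you have named it yourself: your argument needs the target cylinder $Z(\beta)=Z(p[1,i)\mu)$ to meet $A$ with density close to $1$, and nothing in the definition of $\Phi(p)$ gives you any control over \emph{which} path $\mu$ realizes a prescribed cocycle value. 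Your proposed repair --- synchronizing $\mu$ with a second density point $p'$ of $A$ --- asks for a detour that simultaneously (a) has $e^{\Delta_p(i,j;\mu)}\in V$ and (b) has $\mu$ follow a prefix of $p'$ beyond $p[1,i)$ and then rejoin $p$; the detour semigroup only guarantees (a), and there is no reason the constrained family of detours satisfying (b) realizes values near $t$. So the ``technical heart'' you defer is precisely the part not supplied by your setup, and the second inclusion is not proved.

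The paper avoids density points altogether. For $A\subseteq\Wan(Z(v))$ it considers the set $R(A)$ of pairs $(x,y)\in R\cap(A\times A)$ with $e^{c(x,y)}\in U$ and proves (Observation~\ref{21-08-18}) that $m(\pi_l(R(B)))=m(\pi_r(R(B)))=m(B)$ for \emph{every} Borel $B$. The key point is that for a relatively open $V$ the identity \eqref{13-08-18bb}, $\pi_l(R(V'))=V'$ with $V'=V\cap\Wan^s(\Gamma)$, is an exact set equality: given $p\in V'$, a detour with value in $U$ placed far enough along $p$ produces a tail-equivalent path $p'$ agreeing with $p$ on a long initial segment, hence still lying in $V$, and lying in $\Wan^s(\Gamma)$ automatically because tail-equivalence preserves $\Phi$. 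Membership of the perturbed path in an \emph{open} set is free once the shared prefix is long enough, so no density estimate on the target is needed. General Borel sets are then handled by approximating from outside by open sets and using the equivalence of the left and right counting measures $\nu_l$, $\nu_r$; since $\nu_r(R(A))\geq m(\pi_r(R(A)))=m(A)>0$, Proposition 8.4 of \cite{FM1} yields $t\in r^*(D_1)$. To salvage your approach you would need to replace the single partial isomorphism by such a saturation statement, or otherwise circumvent the target-density obstruction.
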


\begin{proof} The first inclusion: Let $t\in [0,\infty) \backslash S(s)$ and let $B_n$ be any of the Borel subsets from the sequence in Lemma \ref{24-08-18}. It follows from Proposition 8.4 in \cite{FM1} that $t \notin r^*(D_1)$.

The second inclusion: Let $t \in S(s)$ and let $U$ be a neighbourhood of $t$ in $[0,\infty)$. When $A \subseteq \Wan(Z(v))$, set
$$
R(A) = \left\{ (x,y)  \in R : \ x, y \in A, \  e^{c(x,y)} \in U \  \right\} \ 
$$
and
$$
A' = A \cap \Wan^s(\Gamma) \ .
$$
We denote the two coordinate projections $R \to \Wan(Z(v))$ by $\pi_l$ and $\pi_r$; i.e. $\pi_l(x,y) =x$ and $\pi_r(x,y) = y$. 
The argument for the second inclusion is based on the following observation which we shall also need later.

\begin{obs}\label{21-08-18} Let $B \subseteq \Wan(Z(v))$ be a Borel subset. Then
$$
m\left(\pi_l(R(B)\right) = m\left(\pi_r(R(B)\right) = m(B) \ .
$$
\end{obs}
\emph{Proof of Observation \ref{21-08-18}:} Let $p \in \Wan^s(\Gamma) \cap \Wan(Z(v))$. Then $\Phi(p) = S(s)$ and it follows that for every $N \in \mathbb N$ there is a $p' \in \Wan^s(\Gamma)$ and $k,n > N$ such that $p'_i = p_i, \ i  \leq N$, $p_{n+j} = p'_{k+j}, \ j \geq 0$, and
$$
e^{F(p[N,n)) - F(p'[N,k))} \in U \ .
$$
This shows that
\begin{equation}\label{13-08-18bb}
\pi_l\left(R(V')\right) = V'
\end{equation}
for every relatively open set $V \subseteq \Wan(Z(v))$. By applying the same argument with $U$ replaced by an open neighborhood of $\frac{1}{t}$ in $[0,\infty]$ it follows in the same way that
\begin{equation}\label{13-08-18bbb}
\pi_r\left(R(V')\right) = V'
\end{equation}
for every relatively open set $V \subseteq \Wan(Z(v))$. Now choose  a decreasing sequence $V_1 \supseteq V_2 \supseteq \cdots$ of relatively open sets in $\Wan(Z(v))$  such that $B \subseteq \bigcap_k V_k$ and $\lim_{k \to \infty} m(V_k) = \lim_{k \to \infty} m(V_k') = m(B)$. Then
$$
\bigcap_k R(V'_k) = R(B') \cap X \ ,
$$
where $X$ is a Borel set such that 
$$
X  \subseteq {\pi_l}^{-1} \left( \bigcap_k V'_k \backslash B'\right) \cup  {\pi_r}^{-1} \left( \bigcap_k V'_k \backslash B'\right) \ . 
$$
Consider the Borel measures $\nu_r$ and $\nu_l$ on $R$ defined from $m$ as in Theorem 2 in \cite{FM1}; $\nu_r$ is the right counting measure used above. Since $m( \bigcap_k V'_k \backslash B') = 0$ it follows from the definitions that
$$
\nu_r \left(  {\pi_r}^{-1} \left( \bigcap_k V'_k \backslash B'\right)\right) = \nu_l \left(  {\pi_l}^{-1} \left( \bigcap_k V'_k\backslash B'\right)\right) = 0 \ .
$$
Since $\nu_l$ and $\nu_r$ are equivalent by Theorem 2 (c) in \cite{FM1} (a fact we have already used to obtain $D_{\beta}$) it follows that
$\nu_l \left(  {\pi_r}^{-1} \left( \bigcap_k V'_k \backslash B'\right)\right) = 0$, and hence that $\nu_l(X) = \nu_r(X) = 0$. Thus 
$$
\nu_i(R(B')) = \nu_i(\bigcap_k R(V'_k)) = \lim_{k \to \infty} \nu_i(R(V'_k)), \ \ i \in \{r,l\} \ . 
$$
Since
\begin{equation*}
\begin{split}
&m\left( \pi_l\left(R(V'_k)\right) \backslash \pi_l\left(R(B')\right)\right) \leq m\left(\pi_l\left(R(V'_k)\backslash R(B') \right)\right) \\
& \\
& \leq \int_{ \pi_l\left(R(V'_k)\backslash R(B')\right)} \# \left\{ y : \ (x,y) \in R(V'_k) \backslash R(B')\right\} \ \mathrm{d} m(x) \\
& \\
& \leq \nu_l \left(R(V'_k)\backslash R(B')\right)  \ ,
\end{split}
\end{equation*} 
we find that $m(\pi_l(R(B')) = \lim_{k \to \infty} m\left(\pi_l(R(V'_k))\right)$. Since $m(B) = \lim_{k \to \infty} m(V'_k)$ it follows from \eqref{13-08-18bb} that $m\left(\pi_l(R(B))\right) = B$. By using \eqref{13-08-18bbb} we find in the same way that $m\left(\pi_r(R(B))\right) = B$. \qed

Combining Observation \ref{21-08-18} with Proposition 8.4 in \cite{FM1} it follows that $S(s) \backslash \{0\} \subseteq r^*(D_1)$. 
\end{proof}

\begin{lemma}\label{22-08-18} $M(\psi)$ is semi-finite iff $0 \notin S(s)$.
\end{lemma}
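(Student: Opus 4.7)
By Connes' Lemme~3.1.2 invoked at the end of Subsection~\ref{tail}, $M(\psi)$ is semi-finite precisely when $0 \notin S(M(\psi))$. Using $S(M(\psi)) \setminus \{0\} = r^*(D_\beta) = r^*(D_1)^\beta$ together with the inclusions $S(s) \setminus \{0\} \subseteq r^*(D_1) \subseteq S(s)$ from Lemma~\ref{18-08-18d}, it suffices to analyse the three possibilities $s \in (0,1]$, $s = 0$, and $s = -1$.

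When $s \in (0,1]$, the set $S(s) \setminus \{0\}$ contains some $t \neq 1$ (take $t = s$ if $s \in (0,1)$, or any $t \in (0,\infty) \setminus \{1\}$ if $s = 1$). By Lemma~\ref{18-08-18d}, $t \in r^*(D_1)$, whence $t^\beta \in r^*(D_\beta) = S(M(\psi)) \setminus \{0\}$. Since $\beta \neq 0$ and $t \neq 1$, $t^\beta \neq 1$, so $S(M(\psi)) \neq \{1\}$ and $M(\psi)$ is not semi-finite, consistent with $0 \in S(s)$.

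When $s = 0$ we have $r^*(D_\beta) = \{1\}$, which is compatible both with semi-finiteness and with type $\mathrm{III}_0$, so the formal $r^*$-data alone is inconclusive and we argue directly. Suppose $M(\psi)$ were semi-finite. Then by general Feldman--Moore theory, $c$ is a Borel coboundary on $R$: there exists a Borel $g : \Wan(Z(v)) \to \mathbb R$ with $c(x,y) = g(y) - g(x)$ for $\nu_r$-a.e.\ $(x,y) \in R$. Since $g$ is $m$-a.e.\ finite, pick $N$ so that $B_N = \{p \in \Wan(Z(v)) : |g(p)| \leq N\}$ has positive $m$-measure; then $|c(x,y)| \leq 2N$ on $R \cap (B_N \times B_N)$. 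But $0 \in \Phi(p) = S(0)$ for $m$-a.e.\ $p$, so Observation~\ref{21-08-18} applied with $B = B_N$ and the neighbourhood $U = [0, e^{-3N})$ of $0$ produces pairs $(x,y) \in R$ with $x,y \in B_N$ and $c(x,y) < -3N$, contradicting the $2N$-bound. Hence $M(\psi)$ is not semi-finite, again matching $0 \in S(0)$.

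Case $s = -1$ is the principal technical crux. Here $\Phi(p) = \{1\}$ $m$-a.e.\ and $r^*(D_\beta) = \{1\}$, and I must show $M(\psi)$ is genuinely semi-finite rather than of type $\mathrm{III}_0$. To do so I construct a Borel $g : \Wan(Z(v)) \to \mathbb R$ with $c(x,y) = g(y) - g(x)$ for $\nu_r$-a.e.\ $(x,y) \in R$; this coboundary exhibits the equivalent $R$-invariant measure $e^{-\beta g}\,m$, so $\mathbf{M}(R)$ and consequently $M(\psi)$ carries a normal semi-finite faithful trace. The asymptotic triviality of $r^*(D_\beta)$ alone does not force coboundary, so the dissipative wandering structure of $(\Wan(\Gamma), m^\psi)$ must be actively exploited, presumably by selecting Borel representatives in each tail-equivalence class via a first-exit-from-growing-finite-subsets procedure and defining $g$ as the accumulated $F$-correction relative to these representatives, with convergence controlled by the tight clustering of tail detour weights at $1$ that the hypothesis $\Phi(p) = \{1\}$ supplies. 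This construction of a Borel coboundary is the main obstacle; once it is in place, the lemma follows from the three cases above.
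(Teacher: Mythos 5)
The ``only if'' half of your argument is essentially sound: for $s\in(0,1]$ the $S$-invariant computation produces a point of $S(M(\psi))$ different from $1$, and for $s=0$ your coboundary-plus-Observation~\ref{21-08-18} contradiction is exactly the paper's argument (the paper runs that single coboundary argument uniformly for every $s$ with $0\in S(s)$, but your split into $s\in(0,1]$ and $s=0$ is equally valid, given that $S(M(\psi))\setminus\{0\}=r^*(D_\beta)$ is established before the lemma). The genuine gap is the case $s=-1$, i.e.\ the implication $0\notin S(s)\Rightarrow M(\psi)$ semi-finite, which you explicitly leave unproved. Your sketch moreover aims at more than is needed: you propose to produce a Borel coboundary for $c$ on all of $\Wan(Z(v))$ by selecting tail-class representatives, and, as you yourself suspect, the hypothesis $\Phi(p)=\{1\}$ a.e.\ does not obviously supply the convergence such a global construction would require.

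The paper sidesteps this entirely by localizing. Since $M(\psi)$ is a factor, it suffices to exhibit one non-zero corner of ${\bf M}(R)$ that is semi-finite. Lemma~\ref{24-08-18}, applied with $t=0$ (which lies outside $S(-1)=\{1\}$), yields a Borel set $B\subseteq\Wan(Z(v))$ with $m(B)>0$ and an $\epsilon\in(0,1)$ such that $e^{c(x,y)}\notin[0,\epsilon)$ for all $(x,y)\in R\cap(B\times B)$; since $c(x,y)=-c(y,x)$ and $R\cap(B\times B)$ is symmetric, this gives $\epsilon\le e^{c(x,y)}\le\epsilon^{-1}$ there, so $c$ is bounded on $R\cap(B\times B)$. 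Theorem~6 of \cite{FM2} then converts the bounded cocycle into a coboundary on $B$, so the modular group is inner on the corner $1_B{\bf M}(R)1_B$, and Th\'eor\`eme~1.3.4 of \cite{Co} gives semi-finiteness of that corner and hence of ${\bf M}(R)$ and $M(\psi)$. You should replace your representative-selection sketch with this corner argument; as written, your proposal does not establish the lemma.
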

\begin{proof} It follows from Th\'eor\`eme 1.3.4 in \cite{Co} that $M(\psi)$ is semi-finite iff ${\bf M}(R)$ is semi-finite iff the modular automorphism group on ${\bf M}(R)$ coming from the vector state $u$ is inner. So when $M(\psi)$ is semi-finite it follows from Proposition 2.8 in \cite{FM2} that the one-parameter group $\theta$ on ${\bf M}(R)$ with the property that
$$
\theta_t(a)(x,y) = e^{-i\beta t c(x,y)}a(x,y)
$$
for all left finite elements $a$ is inner. Since $L^{\infty}(\Wan(Z(v)))$ is maximal abelian in ${\bf M}(R)$ and contained in the fixed point algebra of $\theta$ it follows that there is a Borel function $H : \Wan(Z(v)) \to \mathbb R$ such that 
\begin{equation}\label{22-08-18a}
c(x,y) = H(x) - H(y)
\end{equation}
for $\nu_r$-almost all $(x,y) \in R$. (Recall that $\beta \neq 0$.)  For some $R > 0$ the set
$ B =H^{-1}\left([-R, R]\right)$ has positive $m$-measure. Assume for a contradiction that $0 \in S(s)$. With $U = [0,e^{-3R})$ it follows from Observation \ref{21-08-18} that $\nu_r(R(B)) > 0$ and there are therefore elements $x,y \in B$ for which \eqref{22-08-18a} holds and at the same time $e^{c(x,y)} \leq e^{-3R}$. This contradiction shows that $0 \notin S(s)$. For the converse assume that $0 \notin S(s)$. It follows from Lemma \ref{24-08-18} that there is a Borel subset $B \subseteq \Wan(Z(v))$ such that $m(B) > 0$ and an $\epsilon \in (0,1)$ such that $e^{c(x,y)}\notin [0,\epsilon)$ when $(x,y) \in R \cap (B\times B)$. Then
$$
\epsilon \leq e^{c(x,y)} \leq \epsilon^{-1}
$$
for all $(x,y) \in R \cap (B\times B)$, showing that $c$ is bounded on $R \cap (B \times B)$. It follows from Theorem 6 in \cite{FM2} that there is a Borel function $H : B \to \mathbb R$ such that \eqref{22-08-18a} holds for $\nu_r$-almost all $(x,y) \in R \cap \left(B \times B\right)$. Therefore the restriction of $\theta$ to the non-zero corner $1_B{\bf M}(R)1_B$ is inner, and Th\'eor\`eme 1.3.4 in \cite{Co} implies that $1_B{\bf M}(R)1_B$ is semi-finite. Therefore ${\bf M}(R)$ and $M(\psi)$ are also semi-finite.
\end{proof}

\begin{lemma}\label{22-08-18c} $S(M(\psi)) = S(s)^{\beta}$.
\end{lemma}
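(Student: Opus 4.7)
The plan is to assemble the three ingredients already in place: the general formula \eqref{22-08-18e} expressing $S(M(\psi))$ in terms of $r^*(D_1)$, the sandwich $r^*(D_1) \subseteq S(s) \subseteq r^*(D_1) \cup \{0\}$ from Lemma \ref{18-08-18d}, and the semi-finiteness criterion from Lemma \ref{22-08-18}. The whole argument will split along the dichotomy furnished by Lemma \ref{22-08-18}, and the only real task is to track the point $0$ carefully in each branch.

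First I would dispose of $s=-1$. Then $S(s) = \{1\}$ does not contain $0$, so Lemma \ref{22-08-18} tells me that $M(\psi)$ is semi-finite; consequently $S(M(\psi)) = \{1\}$, which coincides with $S(-1)^{\beta}$. This takes care of the type $I$/$II$ branch.

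In the remaining range $s \in [0,1]$ we have $0 \in S(s)$, so $M(\psi)$ is not semi-finite by Lemma \ref{22-08-18} and the formula \eqref{22-08-18e} applies. The inclusions of Lemma \ref{18-08-18d} combined with $0 \in S(s)$ force the upper and lower sandwich bounds to agree up to the point $0$, giving $r^*(D_1) \cup \{0\} = S(s)$. Substituting into \eqref{22-08-18e} and using the convention $0^{\beta}=0$ yields
\[
S(M(\psi)) \;=\; \{x^{\beta}: x \in r^*(D_1) \cup \{0\}\} \;=\; \{x^{\beta}: x \in S(s)\} \;=\; S(s)^{\beta},
\]
as desired.

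The real work has already been carried out in the preceding lemmas, where the detour semigroup was related to the essential range of the Radon--Nikodym cocycle $D_1$ and the semi-finite/purely infinite dichotomy was pinned down by the presence or absence of $0$ in $\Phi(p)$. At this final stage no additional technique is required beyond the elementary bookkeeping around $0$ indicated above and the standard fact that semi-finite factors have $S$-invariant equal to $\{1\}$; I therefore do not anticipate any substantive obstacle in executing the plan.
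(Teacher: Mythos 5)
Your proof is correct and follows essentially the same route as the paper's: split on the dichotomy of Lemma \ref{22-08-18}, use Lemme 3.1.2 of \cite{Co} in the semi-finite case $s=-1$, and for $s\in[0,1]$ combine $0\in S(s)$ with the sandwich of Lemma \ref{18-08-18d} to get $S(s)=r^*(D_1)\cup\{0\}$ before substituting into \eqref{22-08-18e}. Your explicit bookkeeping of the point $0$ is in fact slightly more careful than the paper's one-line version of the same step.
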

\begin{proof} When $s = -1$, $S(s) = \{1\}$ and $M(\psi)$ is semi-finite by Lemma \ref{22-08-18} and hence $S(M(\psi)) = \{1\}$ by Lemme 3.1.2 in \cite{Co}. It follows that $S(s)^{\beta} = \{1\} = S(M(\psi))$ in this case. When $s \in [0,1]$, $0 \in S(s)$ and hence $M(\psi)$ is not semi-finite by Lemma \ref{18-08-18d}. Furthermore, $S(s) = r^*(D_1) \cup \{0\}$ by Lemma \ref{18-08-18d}.   Therefore the conclusion follows from \eqref{22-08-18e}.
\end{proof}

\emph{Proof of Theorem \ref{22-05-18d}:} The case $s = -1$ was covered in Lemma \ref{22-08-18}, and the same lemma implies that $M(\psi)$ is of type $III$ when $s \in [0,1]$. To complete the proof of Theorem \ref{22-05-18d} it suffices therefore now to compare Lemma \ref{22-08-18c} with the definition of $III_{\lambda}$ in Section IV of \cite{Co}. We leave that to the reader. \qed

\section{Applications and examples}
\subsection{Cayley graphs}
 Given a group $G$ and a finite set of generators $Y$ of $G$ there is a natural way to define a directed graph $\Gamma_{G,Y}$ whose vertexes are the elements of $G$ and with an arrow from $g \in G$ to $h \in G$ iff $g^{-1}h \in Y$. This is the \emph{Cayley graph}. It was pointed out in \cite{CT2} that the graph $C^*$-algebra of $\Gamma_{G,Y}$ is the universal $C^*$-algebra
generated by a set 
$v(g,s), \ g \in G, \ s \in Y$, of partial isometries such that
\begin{equation*}\label{CKrel2}
v(h,t)^*v(g,s) = \begin{cases} 0 \ & \ \text{when} \ h \neq g \ \text{or} \ t \neq s, \\
\sum_{y \in Y} v(gy,y)v(gy,y)^* \ & \text{when} \ h = g \ \text{and} \ t = s  \ .
\end{cases}
\end{equation*}
Following \cite{CT2} we denote the corner $P_{e_0}C^*\left(\Gamma_{G,Y}\right)P_{e_0}$ defined by the neutral element $e_0 \in G$ by $O_Y(G)$, partly because it is a $C^*$-subalgebra of the Cuntz-algebra $O_n$ where $n = \# Y$, cf. \cite{CT2}. As in \cite{CT2} we assume that $Y$ contains at least two elements to ensure that $C^*\left(\Gamma_{G,Y}\right)$ and $O_Y(G)$ are simple $C^*$-algebras. As in \cite{CT2} we consider the generalized gauge action $\alpha^F$ on $C^*\left(\Gamma_{G,Y}\right)$ given by a function $F : Y \to \mathbb R$. In terms of the generators above it is given by the formula
$$
\alpha^F_t(v(g,s)) = e^{i tF(s)} v(g,s).
$$
We denote the restriction of $\alpha^F$ to $O_Y(G)$ by $\gamma^F$. To describe the factor type of the extremal KMS states for $\gamma^F$ we need the following notation.
When $y = (y_1,y_2, \cdots ,y_n) \in Y^n$ we denote the element $ y_1y_2\cdots y_n \in G$ by $\overline{y}$ and set
$$
F(y) = \sum_{i=1}^n F(y_i) \ .
$$
It follows from Lemma 10.4 and Lemma 10.7 in \cite{Th4} that since $\Gamma_{G,Y}$ is strongly connected, the existence of a $\beta$-KMS weight for $\alpha^F$ requires that $\beta \neq 0$ and that either $F(y) > 0$ for all $y \in \bigcup_n Y^n$ with $\overline{y} = e_0$ or that $F(y) < 0$ for all $y \in \bigcup_n Y^n$ with $\overline{y} = e_0$. Therefore, by Theorem 2.4 in \cite{Th2}, there can also only be a $\beta$-KMS state for $\gamma^F$ when this holds. In particular, it follows that
$$
\theta_F  = \sup \  \left(\left]-\infty, 0\right[ \cap  \left\{ F(y) - F(z) : \ y,z \in \bigcup_n Y^n, \ \overline{y} = \overline{z} = e_0 \right\} \right) 
$$
is defined when there is a $\beta$-KMS state for $\gamma^F$.

\begin{theorem}\label{nov9} Let $\phi$ be an extremal $\beta$-KMS state
  for $\gamma^F$ on $O_Y(G)$. Then $\beta \neq 0$ and $\pi_{\phi}\left(O_Y(G)\right)''$ is of type $III_{\lambda}$ where $\lambda = e^{\theta_F |\beta|}$
  \end{theorem}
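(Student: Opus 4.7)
The plan is to pass from $\phi$ to an extremal $\beta$-KMS weight $\psi$ on $C^*(\Gamma_{G,Y})$ with $\psi(P_{e_0}) = 1$ via Theorem 2.4 in \cite{Th2}. By the material of Section 2, $\pi_\phi(O_Y(G))''$ is isomorphic to the corner $\pi_\psi(P_{e_0}) M(\psi) \pi_\psi(P_{e_0})$, hence has the same $S$-invariant as $M(\psi)$. That $\beta \neq 0$ is Lemma 10.4 of \cite{Th4}. Since $\Gamma_{G,Y}$ is strongly connected, $m^\psi$ is concentrated either on $\Wan(\Gamma_{G,Y})$ or on its complement; the conservative case is handled by \cite{Th5} and gives the same conclusion, so I focus on the dissipative case and apply Theorem \ref{22-05-18d}.

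Define the additive subgroup
\[G_F = \left\{F(\alpha) - F(\beta) : \alpha, \beta \in \bigcup_n Y^n, \ \overline{\alpha} = \overline{\beta} = e_0\right\}\]
of $\mathbb R$, so that $\theta_F = \sup(G_F \cap (-\infty, 0))$, with $\theta_F = 0$ when $G_F$ is dense in $\mathbb R$. The core claim is that $\Phi(p) = S(e^{\theta_F})$ for $m^\psi$-almost every $p$, whence Theorem \ref{22-05-18d} yields $\lambda = e^{\theta_F |\beta|}$. For the inclusion $\Phi(p) \setminus \{0\} \subseteq \overline{e^{G_F}}$, observe that any detour value $e^{F(p[i,j))-F(\mu)}$ arises from two words $y, w \in \bigcup_n Y^n$ with $\overline{y} = \overline{w}$; appending any word $u$ with $\overline{u}$ inverse to this common element (which exists because $Y$ generates $G$ as a monoid, by the strong connectivity of $\Gamma_{G,Y}$) turns $yu$ and $wu$ into loops at $e_0$ with the same $F$-difference, so the detour value lies in $e^{G_F}$, and closedness of $\Phi(p)$ gives the claimed inclusion.

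For the reverse inclusion, rather than directly producing detours on $p$ with every desired value (the natural insertion construction $\mu = (s(p_i)\gamma) \cdot p[i,j)$ only yields values $e^{-F(\gamma)}$ for loops $\gamma$ and is therefore one-sided), I pass to the tail-equivalence relation $R$ from Section \ref{tail} and invoke Lemma \ref{18-08-18d}. For each pair of loops $\alpha, \beta$ at $e_0$, the Borel subset
\[C_{\alpha, \beta} = \{(p,q) \in R : p \in Z(\alpha), \ q \in Z(\beta), \ \sigma^{|\alpha|}(p) = \sigma^{|\beta|}(q)\}\]
has $\nu_r(C_{\alpha, \beta}) = m(Z(\beta))$ and $c \equiv F(\alpha) - F(\beta)$ on it, so $e^{F(\alpha) - F(\beta)} \in r^*(D_1)$; combined with $r^*(D_1) \subseteq \overline{e^{G_F}}$ from $c(R) \subseteq G_F$, this yields $r^*(D_1) = \overline{e^{G_F}}$ and Lemma \ref{18-08-18d} forces $\Phi(p) = S(e^{\theta_F})$ almost surely. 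The main obstacle I anticipate is verifying the positivity $m(Z(\gamma)) > 0$ for every loop $\gamma$ at $e_0$, which I expect to follow from the KMS scaling relation $m^\psi(Z(\gamma)) = e^{-\beta F(\gamma)} m^\psi(Z(e_0)) > 0$, although some care is needed because the dissipative measure can be singular with respect to the natural measure class on all of $P(\Gamma_{G,Y})$ and one must check that the scaling applied to the restriction to wandering paths still delivers a positive mass on each cylinder.
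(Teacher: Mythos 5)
Your overall architecture is the same as the paper's: pass to the $\beta$-KMS weight via Theorem 2.4 of \cite{Th2}, dispose of the conservative case by \cite{Th5}, and in the dissipative case feed an identification of the almost-sure detour semigroup into Theorem \ref{22-05-18d}. Your inclusion $\Phi(p)\subseteq \overline{e^{G_F}}$ is correct, and your instinct that the naive insertion construction is one-sided is sound and worth taking seriously: inserting a translated loop $\gamma$ into $p$ only produces detour values $e^{-F(\gamma)}$, and there are individual wandering paths for which nothing more is available (in the Cayley graph of $\mathbb Z$ with $Y=\{1,-1\}$ the path that always steps by $+1$ is wandering but has $\Phi(p)=\{1\}$), so the identification $\Phi(p)=S(e^{\theta_F})$ can only be an almost-everywhere statement and genuinely requires proof; the paper compresses this point into a single asserted sentence.

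The gap is in the step ``$c\equiv F(\alpha)-F(\beta)$ on the positive-measure set $C_{\alpha,\beta}$, so $e^{F(\alpha)-F(\beta)}\in r^*(D_1)$''. Membership in $r^*(D_1)$ in the sense of Proposition 8.4 of \cite{FM1} requires that for \emph{every} Borel set $A$ of positive measure and every neighbourhood $U$ of the value there is a positive-measure set of pairs in $R\cap(A\times A)$ with $e^{c}\in U$; attaining the value on one positive-measure set of pairs is not enough. (This is exactly how the paper uses Proposition 8.4 in Lemma \ref{18-08-18d}: a single set $B_n$ on which $e^c$ avoids a neighbourhood of $t$ already excludes $t$ from $r^*(D_1)$. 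For a counterexample to your inference, take the ITPFI factor $\bigotimes_n(M_2(\mathbb C),\omega_n)$ with eigenvalue ratio $1/2$ in the first factor and $1/3$ in all others: the Radon--Nikodym cocycle equals $2$ on a set of positive measure, yet the ratio set is $\{3^k: k\in\mathbb Z\}\cup\{0\}$.) Your $C_{\alpha,\beta}$ only witnesses the value on the fixed pair of cylinders $Z(\alpha)\times Z(\beta)$; the ``for every $A$'' quantifier is precisely what Observation \ref{21-08-18} supplies in the paper's machinery and is the real content here, whereas the positivity $m(Z(\gamma))>0$ that you flag as the main obstacle follows at once from $e^{\beta F}$-conformality. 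The repair uses the homogeneity of the Cayley graph: given $A$ with $m(A)>0$, choose a cylinder $Z(\nu)$ with $m(A\cap Z(\nu))>(1-\epsilon)m(Z(\nu))$, pick $n>|\nu|$, and for loops $y,z$ at $e_0$ compare the two injections sending $x$ to $x[1,n)\,y_g\,x[n,\infty)$ and to $x[1,n)\,z_g\,x[n,\infty)$, where $g=s(x_n)$ and $y_g,z_g$ are the translated loops at $g$; conformality gives Radon--Nikodym derivatives $e^{-\beta F(y)}$ and $e^{-\beta F(z)}$, so for $\epsilon$ small a positive-measure set of $x\in A\cap Z(\nu)$ has both images in $A$, and these image pairs lie in $R\cap(A\times A)$ with $c=F(y)-F(z)$. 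With that inserted, your deduction $S(s)=\overline{e^{G_F}}=S(e^{\theta_F})$ from Lemma \ref{18-08-18d} goes through and the theorem follows.
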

\begin{proof} By Theorem 2.4 in \cite{Th2} $\phi$ is the restriction to $O_Y(G)$ of a $\beta$-KMS weight for $\alpha^F$. When that weight is dissipative Theorem \ref{22-05-18d} yields the stated conclusion by observing that for any wandering path $p$ in $\Gamma_{G,Y}$ the detour semigroup $\Phi(p)$ is the closure in $[0,\infty)$ of
$$
\left\{ e^{F(y) - F(z)}: \  y,z \in \bigcup_n Y^n, \ \overline{y} = \overline{z} = e_0 \right\}  \ .
$$
When the weight is conservative the conclusion follows from Theorem 3.3 in \cite{Th5}.
\end{proof}

We note that in the dichotomy between conservative and dissipative KMS weights which was used in the proof above, the presence of a conservative KMS weight is rare. Indeed, when $G$ is infinite a conservative KMS weight can only exist when $G$ has a subgroup of finite index which is isomorphic to $\mathbb Z$ or $\mathbb Z^2$. This follows from the connection to random walks which was explained in Section 3.1 of \cite{Th4} by appeal to Theorem 3.24 on page 36 in \cite{Wo}. Furthermore, a conservative $\beta$-KMS weight can only exist for one value of $\beta$ by Proposition 10.8 in \cite{Th4}.

\subsection{Exits and Araki-Woods factors}\label{exits}

As shown in \cite{Th5} the factor type of a conservative KMS weight is always $III_{\lambda}$ for some $\lambda \in ]0,1]$, and it follows from Theorem \ref{nov9} that the same holds for the actions on Cayley graph considered in \cite{CT2}. In contrast we show in this section that the dissipative KMS weights for generalized gauge actions on graphs can also be of type $I$, type $II$ and type $III_0$.

For convenience we assume for this that $\Gamma$ is strongly connected and row-finite; besides the standing assumption that $C^*(\Gamma)$ is simple. Let $F : \Gamma_{Ar} \to \mathbb R$ be a potential and $\beta \in \mathbb R$ a real number. We define a matrix $A(\beta) = \left(A(\beta)_{v,w}\right)_{v,w \in \Gamma_V}$ over the vertexes of $\Gamma$ such that
$$
A(\beta)_{v,w} = \sum_a e^{-\beta F(a)}
$$
where the sum is over $a \in s^{-1}(v) \cap r^{-1}(w)$, i.e. the arrows that go from $v$ to $w$. It follows that the $\beta$-KMS weights for $\alpha^F$ are dissipative if and only if $A(\beta)$ is transient in the sense that
\begin{equation}\label{27-06-18}
\sum_{n=0}^{\infty} A(\beta)^n_{v,v} \ < \ \infty 
\end{equation}
for one (and hence all) $v \in \Gamma_V$, cf. Theorem 4.10 in \cite{Th2}. We assume therefore here that this holds. As in \cite{Th2} an \emph{exit path} in $\Gamma$ is a sequence
$t=(t_i)_{i=1}^{\infty}$ of vertexes such that there is an arrow $a_i$ with
$s(a_i) = t_i$ and $r(a_i) = t_{i+1}$ for all $i$, and such that $\lim_{i
  \to  \infty} t_i = \infty$ in the sense that $t_i$ eventually leaves
any finite subset of vertexes. An \emph{exit} is a tail-equivalence class of exit paths. For a given exit path
$t$ and a real number $\beta$ we set
$$
t^{\beta}(i) \ = \ A(\beta)_{t_1,t_2} A(\beta)_{t_2,t_3}\cdots A(\beta)_{t_{i-1},t_i} \ .
$$
As in \cite{Th2} we say that $t$ is \emph{$\beta$-summable} when the limit
$$
\lim_{i \to \infty} t^{\beta}(i)^{-1} \sum_{n=0}^{\infty} A(\beta)^n_{v,t_i} 
$$
is finite for one, and hence for all vertexes $v \in \Gamma_V$. As shown in
\cite{Th2} a $\beta$-summable exit gives rise to an $e^{\beta F}$-conformal measure $m_t$ on $P(\Gamma)$ determined by the condition that
$$
m_t(Z(v)) = \lim_{i \to \infty} t^{\beta}(i)^{-1} \sum_{n=0}^{\infty}
A(\beta)^n_{v,t_i} 
$$
for every vertex $v$. It follows from Corollary 5.4 in \cite{Th2} that
the corresponding KMS weight $\psi_t$ on $C^*(\Gamma)$ is extremal. To determine the factor type of $\psi_t$, set 
$$
k_n = \# s^{-1}(t_n) \cap
r^{-1}(t_{n+1}) \ .
$$
Choose a numbering $e^n_1,e^n_2, \cdots , e^n_{k_n}$ of the arrows in
$s^{-1}(t_n) \cap r^{-1}(t_{n+1})$, and let $\omega_n$ be the state on $M_{k_n}(\mathbb C)$
given by
\begin{equation*}\label{nov26}
\omega_n (a) = \frac{\Tr \left( e^{-\beta H}
    a\right)}{\Tr\left(e^{-\beta H}\right)},
\end{equation*}
where $H = \diag\left(F(e^n_1),F(e^n_2), \cdots,
  F(e^n_{k_n})\right)$. Then $\omega = \otimes_{n=1}^{\infty}
\omega_n$ is a state on the UHF-algebra
\begin{equation*}\label{UHF}
A = \otimes_{n=1}^{\infty} M_{k_n}(\mathbb C) ,
\end{equation*}
and we set
$$
\mathcal R(t) = \pi_{\omega}(A)'' .
$$ 
Note that $\mathcal R(t)$ is an Araki-Woods factor, cf. \cite{AW}. As shown in Section \ref{tail} the corner $\pi_{\psi_t}\left(P_{t_1}\right)M(\psi_t)\pi_{\psi_t}\left(P_{t_1}\right)$ can be identified with the Feldman-Moore factor ${\bf M}(R)$ where $R$ is the tail-equivalence relation on $(\Wan(Z(t_1)),m)$ with $m = \psi_t\left(P_{t_1}\right)^{-1}m^{\psi_t}$. Set 
$$
  B = \left\{ p \in P(\Gamma): \ s(p_i) = t_i, \ i =1,2,3, \cdots \right\} \ 
  $$
which is a compact subset of $\Wan(Z(t_1))$ homeomorphic to the infinite product space
$$
X = \prod_{n=1}^{\infty} s^{-1}(t_n) \cap r^{-1}(t_{n+1}) \ .
$$
Note that $m^{\psi_t}(B) > 0$ by Lemma 5.4 in \cite{Th2}.
Under the identification $B =X$ the Borel probability measure on $X$ obtained by normalizing the restriction of $m^{\psi_t}$ to $B$ will be the infinite product Bernoulli measure $\prod_{n=1}^{\infty} \omega_n$, where $\omega_n$ is given by the probability vector obtained by normalizing the vector
$\left(e^{-\beta F(e^n_1)},e^{-\beta F(e^n_2)}, \cdots,
  e^{-\beta F(e^n_{k_n})}\right)$. Hence 
 \begin{equation*}\label{15-08-18h}
 1_B{\bf M}(R)1_B \simeq \mathcal R(t) \ .
 \end{equation*}
In particular, we have shown

\begin{lemma}\label{nov24} Assume that $t$ is $\beta$-summable and let $M(\psi_t)$ be the von Neumann algebra factor of $\psi_t$.
 There is a projection $p \in  M(\psi_t)$ fixed by the modular automorphism group of $\tilde{\psi_t}$ such
  that 
$$
p M(\psi_t)p \simeq \mathcal R(t) \  .
$$ 
\end{lemma}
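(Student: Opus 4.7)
The plan is to extract the projection $p$ from the Feldman--Moore corner picture developed in Section \ref{tail} and then invoke the identification $1_B \mathbf{M}(R) 1_B \simeq \mathcal R(t)$ established in the paragraphs preceding the lemma statement.

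First I would observe that since $B$ is a compact (hence Borel) subset of $\Wan(Z(t_1))$, the indicator $1_B$ is a projection in the maximal abelian subalgebra $L^\infty(\Wan(Z(t_1)), m) \subseteq \mathbf{M}(R)$. Under the $*$-isomorphism $\pi_{\psi_t}(P_{t_1}) M(\psi_t) \pi_{\psi_t}(P_{t_1}) \simeq \mathbf{M}(R)$ furnished by Section \ref{tail}, $1_B$ pulls back to a projection $p \leq \pi_{\psi_t}(P_{t_1})$ in $M(\psi_t)$, non-zero because $m^{\psi_t}(B) > 0$ by Lemma 5.4 in \cite{Th2}.

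Next I would check that $p$ is fixed by the modular automorphism group $\theta$ of $\tilde{\psi}_t$. Recall from the proof of Lemma \ref{22-08-18} that under the isomorphism above $\theta_s$ is transported to the one-parameter group on $\mathbf{M}(R)$ acting on left-finite functions $a$ by $\theta_s(a)(x,y) = e^{-i\beta s c(x,y)} a(x,y)$. Viewed as a function on $R$, the projection $1_B$ is supported on the diagonal $\{(x,x) : x \in B\}$, where $c$ vanishes identically; hence $\theta_s(1_B) = 1_B$ for all $s \in \mathbb R$, and consequently $\theta_s(p) = p$ for all $s$.

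It then remains only to identify the corner. Since $p \leq \pi_{\psi_t}(P_{t_1})$,
\[
p M(\psi_t) p \;\simeq\; 1_B \mathbf{M}(R) 1_B \;\simeq\; \mathcal R(t),
\]
where the last isomorphism is the Bernoulli product identification $(B, m|_B) \simeq (X, \prod_n \omega_n)$ with $\omega_n$ given by \eqref{nov26}. There is no serious obstacle: the proof amounts to a packaging of identifications already carried out in this subsection, and the only substantive point is the modular invariance of $p$, which reduces to the diagonal support of $1_B$ combined with the explicit form of $\theta$ on left-finite functions from \cite{FM2}.
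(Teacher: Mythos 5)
Your proposal is correct and follows essentially the same route as the paper, which obtains the lemma directly from the identification $1_B\mathbf{M}(R)1_B \simeq \mathcal R(t)$ set up in the paragraphs preceding the statement, with $p$ being the pullback of $1_B$ under the corner isomorphism. The only thing you add is the explicit check that $1_B$ is fixed by the modular group (via its diagonal support, where the cocycle $c$ vanishes), a point the paper leaves implicit by having already noted that $L^{\infty}(\Wan(Z(v)))$ is contained in the fixed-point algebra of $\theta$.
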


We will say that the exit path $t = (t_n)_{n=1}^{\infty}$ is \emph{slim}
when 
$$
\lim_{i\to \infty} \left(\# s^{-1}(t_i) \cap r^{-1}(t_{i+1})\right) \ = \ 1 \ .
$$

\begin{proposition}\label{nov25}  Assume that $t$ is $\beta$-summable for the gauge action and let $\psi_t$ be the
  corresponding $\beta$-KMS weight for the gauge action on
  $C^*(\Gamma)$. The factor type of $\psi_t$ is $I_{\infty}$
  when $t$ is slim and $II_{\infty}$ factor when it is
  not.
\end{proposition}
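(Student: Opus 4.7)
The plan is to apply Lemma \ref{nov24} after first identifying the Araki-Woods factor $\mathcal{R}(t)$ explicitly for the gauge action. Since $F \equiv 1$, each $H$ in the definition of $\omega_n$ is the identity, so $\omega_n$ is the normalized trace on $M_{k_n}(\mathbb{C})$ and $\omega = \bigotimes_n \omega_n$ is the unique tracial state on the UHF-algebra $A = \bigotimes_n M_{k_n}(\mathbb{C})$. Hence $\mathcal{R}(t) = \pi_\omega(A)''$ is the finite-dimensional factor $M_N(\mathbb{C})$ with $N = \prod_n k_n$ when $t$ is slim (so $k_n = 1$ for all sufficiently large $n$ and $A$ is essentially finite-dimensional), while it is the hyperfinite $II_1$ factor when $t$ is not slim (infinitely many $k_n \geq 2$ produce an infinite-dimensional UHF algebra carrying its unique tracial state).

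In the slim case, Lemma \ref{nov24} furnishes a projection $p \in M(\psi_t)$ with $pM(\psi_t)p \cong M_N(\mathbb{C})$, so $M(\psi_t)$ is a type $I$ factor. Since $\psi_t \neq 0$ its GNS-representation $\pi_{\psi_t}$ is nonzero, hence faithful by simplicity of $C^*(\Gamma)$. Because $\Gamma$ admits an exit path that leaves every finite subset of vertices, $\Gamma_V$ is infinite, $C^*(\Gamma)$ is infinite-dimensional, and $M(\psi_t)$ cannot be a finite-type $I$ factor. Therefore $M(\psi_t)$ is of type $I_\infty$.

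In the non-slim case, $pM(\psi_t)p$ is the hyperfinite $II_1$ factor, so $M(\psi_t)$ is of type $II$; the main obstacle is ruling out type $II_1$. I would exploit strong connectedness: for any vertices $v, w \in \Gamma_V$, finite paths $\mu : v \to w$ and $\nu : w \to v$ yield partial isometries in $C^*(\Gamma)$ whose source and range projections give $\pi_{\psi_t}(P_w) \preceq \pi_{\psi_t}(P_v)$ and $\pi_{\psi_t}(P_v) \preceq \pi_{\psi_t}(P_w)$ in the Murray-von Neumann ordering of $M(\psi_t)$, so that all the vertex projections are mutually Murray-von Neumann equivalent in the factor $M(\psi_t)$. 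If $M(\psi_t)$ were of type $II_1$ with normalized trace $\tau$, these projections would share a common trace $c$, which is strictly positive by faithfulness of $\pi_{\psi_t}$ and of $\tau$. Non-degeneracy of $\pi_{\psi_t}$, combined with row-finiteness of $\Gamma$, gives $\sum_{v \in \Gamma_V} \pi_{\psi_t}(P_v) = 1$ strongly in $M(\psi_t)$, so by normality of $\tau$ we would have $1 = \tau(1) = \sum_v c = \infty$, a contradiction. Hence $M(\psi_t)$ is of type $II_\infty$.
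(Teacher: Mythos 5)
Your proof is correct and follows essentially the same route as the paper: identify $\mathcal R(t)$ as a matrix algebra in the slim case and as the hyperfinite $II_1$ factor otherwise, invoke Lemma \ref{nov24} to pin down the type as $I$ or $II$, and then rule out finiteness using strong connectedness. The only cosmetic difference is in the last step, where the paper exhibits partial isometries $v_n$ with $v_n^*v_n = 1_{Z(t_1)}$ and mutually orthogonal ranges $v_nv_n^* \leq 1_{Z(t_n)}$ along the exit, whereas you note that the infinitely many mutually equivalent vertex projections sum strongly to $1$ and contradict a normalized trace; both arguments amount to producing infinitely many orthogonal, mutually equivalent, nonzero projections.
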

\begin{proof} The gauge action arises by choosing $F$ to be constant
  $1$. It follows that $\mathcal R(t)$ is a type I factor when $t$ is
  slim and the hyper finite $II_1$ factor otherwise. It follows
  therefore from Lemma \ref{nov24} that $
  M(\psi_t)$ is type $I$ when $t$ is slim and type $II$
  otherwise. To complete the proof we need to show that $M(\psi_t)$ is not finite. To this
end choose for each $n \geq 2$ a path $\mu_n$ in $\Gamma$ such that
$s(\mu_n) = t_n$ and $r(\mu_n) = t_1$. The characteristic function
$v_n$ of
the set
$$
\left\{ (\mu_n x, |\mu_n|, x) : \ x \in Z(t_1) \right\}
$$
is an element of $C_c(\mathcal G)$ such that $v_n^*v_n = 1_{Z(t_1)}$ and
$v_nv_n^* \leq 1_{Z(t_n)}$. In particular, the $v_n$'s represent non-zero
partial isometries in $M(\psi_t)$. Since $t$ is an exit an infinite
sub-collection of the $v_n$'s will have orthogonal ranges. It follows that $M(\psi_t)$
is not finite.
\end{proof}

It follows from Proposition \ref{nov25} that the examples considered in \cite{Th2} exhibit extremal dissipative KMS weights of both type $I_{\infty}$ and type $II_{\infty}$. In fact, since all extremal dissipative $\beta$-KMS weights of a strongly connected graph with at most countably many exits arise from $\beta$-summable exits, it follows that they are all of semi-finite type when the action is the gauge action. This illustrates the fundamental difference between Cayley graphs and the graphs with countably many exits.

\begin{example}\label{dec5b} In the final example we show that exits can also give rise to dissipative KMS weights of type $III_0$. For this we consider the following row-finite digraph $\Gamma$.
\begin{equation*}\label{G5}
\begin{xymatrix}{
v \ar@/^/[r]^-1\ar@/_/[r]_-{a_1} &  \ar@/^/[r]^-1\ar@/_/[r]_-{a_2}  &  
\ar@/^/[r]^-1\ar@/_/[r]_-{a_3} &  \ar@/^/[r]^-1\ar@/_/[r]_-{a_4} & 
\ar@/^/[r]^-1\ar@/_/[r]_-{a_5} & \hdots }
 \end{xymatrix}
\end{equation*}
The label on an arrow shows the value which the potential $F$ takes on the arrow. To identify $F$ we need only to specify the sequence $\{a_i\}$. Pick first an arbitrary sequence $\lambda_k, k =1,2,3, \cdots$, in the open interval $(0,1)$ and set $a_k = 1 - \log \lambda_k$. It follows from Theorem 2.7 in \cite{Th2} that for any $\beta \in \mathbb R$ there is a $\beta$-KMS weight on $C^*(\Gamma)$ which is unique up to scalar multiplication. Fix $\beta \in \mathbb R$. When we use the obvious identification $\Wan(Z(v)) \simeq \{1,2\}^{\mathbb N}$ the Borel probability measure $m$ from Section \ref{tail} is the product measure $\prod_i m_i$ where $m_i$ is the probability measure on $\{1,2\}$ defined from the probability vector 
$$
\left( \frac{e^{-\beta}}{ e^{-\beta} + e^{-a_k \beta}}, \ \frac{e^{-a_k \beta}}{ e^{-\beta} + e^{-a_k \beta}} \right) \  .
$$ 
For $\beta =1$ this is the vector
$$
\left( \frac{1}{1+\lambda_k}, \  \frac{\lambda_k}{1+\lambda_k}\right) \ .
$$
It follows therefore from Theorem 1.4 in \cite{GS} that we may choose the sequence $\{\lambda_k\}$ such that factor $\mathcal R(t)$ is of type $III_0$; in fact in such a way that the $T$-invariant $T(\mathcal R(t))$ of Connes, \cite{Co}, is any given countable subgroup of $\mathbb R$. 

The graph $\Gamma$ above is not strongly connected, but this can be arranged by adding return paths to $v$ in the same way as in Sections 7 and 8 of \cite{Th2}. With a little care the $C^*$-algebra of the resulting graph will still carry a generalized gauge action with an essentially unique $1$-KMS weight whose factor type is $III_0$ and whose $T$-invariant is any given countable subgroup of $\mathbb R$.

\end{example}


%
%

\begin{acknowledgements} I am grateful to Nadia Larsen and Johannes Christensen for helpful discussions. The work was supported by the DFF-Research Project 2 `Automorphisms and Invariants of Operator Algebras', no. 7014-00145B.

\end{acknowledgements}


\begin{thebibliography}{}
%
%
\bibitem{AW}
 H. Araki and E.J. Woods, {\em A classification of
    factors}, Publ. Res. Inst. Math. Sci. Ser. A {\bf 4} (1968),
  51-130.
  
 
  
 
\bibitem{C} F. Combes, {\em Poids associ\'e \`a une alg\`ebre
    hilbertienne \`a gauche}, Compos. Math. {\bf 23} (1971), 49-77.

\bibitem{Co} A. Connes, {\em Une classification des facteurs de type III}, Ann. Sci. Ecole Norm. Sup. {\bf 6} (1973), 133-252. 


\bibitem{CT1}  J. Christensen and K. Thomsen, {\em Diagonality of
    actions and KMS weights}, J. Oper. Th. {\bf 76} (2016), 449-471. 
    
    
\bibitem{CT2}  J. Christensen and K. Thomsen, {\em Equilibrium states and ground states from Cayley graphs},  J. Func. Analysis {\bf 274} (2018), 1553-1586.


\bibitem{FM1} J. Feldman and C.C. Moore, {\em Equivalence Relations, Cohomology, and von Neumann Algebras I}, Trans. Amer. Math. Soc. {\bf 234} (1977), 289-324.

\bibitem{FM2} J. Feldman and C.C. Moore, {\em Equivalence Relations, Cohomology, and von Neumann Algebras II}, Trans. Amer. Math. Soc. {\bf 234} (1977), 325-359.


\bibitem{FLR} N.J. Fowler, M. Laca and I. Raeborn, {\em The $C^*$-algebras of infinite graphs}, Proc. Amer. Math. Soc. {\bf 128} (2000), 2319-2327.

\bibitem{GS} T. Giordano and G. Skandalis, {\em Krieger factors isomorphic to their tensor square and pure point spectrum flows} , J. Func. Analysis {\bf 64} (1985), 209--226.

\bibitem{Kr} W. Krieger, {\em On the Araki-Woods asymptotic ratio set and nonsingular transformations of a measure space}, in Contributions to Ergodic Theory and Probability, Proc. Conf., Ohio State Univ., Columbus, Ohio (1970), 158-177. Lecture Notes in Math., vol. 160, Springer, Berlin, 1970.     
    
    
\bibitem{KPRR} A. Kumjian, D. Pask, I. Raeburn and J. Renault, {\em Graphs, Groupoids, and Cuntz-Krieger algebras}, J. Func. Analysis {\bf 144} (1997), 505-541.   

\bibitem{Ku} J. Kustermans, {\em KMS-weights on $C^*$-algebras},
  arXiv: 9704008v1.



\bibitem{KV1} J. Kustermans and S. Vaes, {\em Weight theory for
    $C^*$-algebraic quantum groups}, arXiv:990163.



\bibitem{KV2} J. Kustermans and S. Vaes, {\em Locally compact
    quantum groups}, Ann. Scient. \'Ec. Norm. Sup. {\bf 33}, 2000, 837-934.


\bibitem{LLNSW} M. Laca, N. Larsen, S. Neshveyev, A. Sims, S.B.G. Webster, {\em von Neumann algebras of strongly connected higher-rank graphs}, Math. Ann. {\bf 363} (2015), 657–-678.

  
\bibitem{Pa} A.L.T. Paterson, {\em Graph inverse semigroups,
    groupoids and their $C^*$-algebras}, J. Operator Theory {\bf 48}
  (2002), 645-662. 
  
\bibitem{Re} J. Renault, {\em A Groupoid Approach to $C^*$-algebras},  LNM 793, Springer verlag, Berlin, Heidelberg, New York, 1980.  



\bibitem{Sz} W. Szymanski, {\em Simplicity of Cuntz-Krieger
   algebras of infinite matrices}, Pac. J. Math. {\bf 122} (2001), 249-256.

\bibitem{Th1} K. Thomsen, {\em KMS weights on groupoid and graph $C^*$-algebras}, J. Func. Analysis {\bf 266} (2014), 2959--2988.


\bibitem{Th2} K. Thomsen, {\em KMS weights on graph $C^*$-algebras}, Adv. Math. {\bf 309} (2017), 334--391.


\bibitem{Th3} K. Thomsen, {\em Phase transition in $O_2$}, Comm. Math. Phys. 349 (2017), 481–-492.


\bibitem{Th4} K. Thomsen, {\em KMS weights, conformal measures and ends in digraphs}, arXiv:1612.04716v5.


\bibitem{Th5} K. Thomsen, {\em The factor type of conservative KMS weights on graph $C^*$-algebras}, Analysis and operator theory, 379-394, Springer Optim. Appl. 146, Springer, Cham, 2019.



\bibitem{Wo} W. Woess, {\em Random walks in infinite graphs and
    groups}, Cambridge University Press, Cambridge, 2000.


\end{thebibliography}


\end{document}